\newtheorem{thm}{Theorem}
\newtheorem{cor}[thm]{Corollary}
\newtheorem{lem}[thm]{Lemma}
\newtheorem{prop}[thm]{Proposition}
\theoremstyle{definition}
\newtheorem{rem}[thm]{Remark}
\newtheorem{exam}[thm]{Example}
\numberwithin{equation}{section}
\begin{document}

\title{$q$-\'etale covers of cyclic $p$-gonal covers}
\author[A. Carocca]{\'Angel Carocca}
\email{angel.carocca@ufrontera.cl}

\author[R. A. Hidalgo]{Rub\'en A. Hidalgo}
\email{ruben.hidalgo@ufrontera.cl}

\author[R. E. Rodr\'{\i}guez]{Rub\'{\i} E. Rodr\'{\i}guez}
\email{rubi.rodriguez@ufrontera.cl}

\address{Departamento de Matem\'atica y Estad\'{\i}stica, Universidad de La Frontera. Temuco, Chile}

\subjclass{14H40, 14H30}
\keywords{Riemann Surfaces, Coverings, Jacobian, Prym variety}

\thanks{The authors were partially supported by Grants Fondecyt 1190001, 1190991 and 1200608}
\begin{abstract}
In this paper we study the Galois group of the Galois cover of the composition of a $q$-cyclic \'etale cover and a cyclic $p$-gonal cover for any odd prime $p$. Furthermore, we give properties of isogenous decompositions of certain Prym and Jacobian varieties associated to intermediate subcovers  given by subgroups.
\end{abstract}

\maketitle


\section{Introduction}

Let $\mathcal{X}$ be a compact Riemann surface  and  $  \varphi: \mathcal{X} \rightarrow \mathbb{P}^{1}$  a cover of degree  $p$. The problem of determining the structure of the  Galois group  of $  \varphi $ in general was originally considered by O. Zariski \cite{Zariski;1978}.
This problem has since been considered by many authors and some results are known, see for instance   \cite{Allcock;Hall;2010, Artebani;Pirola;2005, Guralnick;1995, Guralnick;Shareshian;2007, Magaard;Volklein;2004}.
In this direction,  a more general problem is  to consider   a sequence of covers $\mathcal{Y} \xrightarrow{\quad {\psi}} \mathcal{X} \xrightarrow {\quad \varphi} \mathbb{P}^{1}$ of compact Riemann surfaces, and  to study the Galois group of the composite cover $\varphi \circ \psi:\mathcal{Y}\rightarrow \mathbb{P}^{1}$. 
Some results on this problem, considering special properties of the covers $\varphi$ and $\psi$, can be found for instance in \cite{Arenas;Rojas;2010, Biggers;Fried;1986, Carocca;Romero, Diaz;Donagi;1989, Guralnick;Shareshian;2007, Kanev;2006, Recillas;1994, Vetro;2007}.
Probably the most studied case is when $\psi:\mathcal{Y}\rightarrow \mathcal{X}$ is an unramified cover of degree two; the results obtained in this situation involve a systematic study of the Weyl groups,  see for instance \cite{Kanev;2006, Vetro;2007}.
\vspace{2mm}\\
In this paper we study and completely characterize  the Galois group of the cover $\varphi \circ \psi $ when  $ {{\psi}} $ is  a $q$-cyclic \'etale cover and ${ \varphi}$ is a cyclic $p$-gonal cover for any prime integers $q \ne p$  and $p$ odd. Using this characterization we give some results on isogenous decompositions of certain Prym and Jacobian varieties associated to intermediate subcovers given by appropriate  subgroups.
We mainly use the same notations and definitions as in \cite{bl,m}.

\section{An isogenous decomposition from the $q$-homology cover of a compact Riemann surface}
Associated to a compact Riemann surface ${\mathcal X}$ of genus $g$, and to each integer $q \geq 2$, is its $q$-homology cover Riemann surface $\widetilde{\mathcal X}$, whose Galois cover group is $H_{1}({\mathcal X};{\mathbb Z}_{q}) \cong {\mathbb Z}_{q}^{2g}$,
the $q$-homology group of $\mathcal X$ (see Section \ref{Sec:qhomology}).
In this section we provide an isogenous decomposition of the associated Prym variety $P(\widetilde{\mathcal X}/{\mathcal X})$ in terms of the  maximal subgroups of the $q$-homology group. In the next section, we will particularize this situation to the case when ${\mathcal X}$ is assumed to be cyclic $p$-gonal, for any odd prime integer $p \ne q.$

\subsection{The $q$-homology}\label{Sec:qhomology}
Let $q \geq 2$ be an integer and $\mathcal X$  a compact Riemann surface of genus $g \geq 2$. By the Klein-Koebe-Poincar\'e uniformization theorem, we may assume that ${\mathcal X}={\mathbb H}^{2}/\Gamma$, where ${\mathbb H}^{2}$ denotes the upper-half plane and $\Gamma \cong \pi_{1}({\mathcal X})$ is a co-compact Fuchsian group acting on it.
Let $\Gamma_{q}$ be the normal subgroup of $\Gamma$ generated by its derived subgroup $\Gamma'$ and the $q^{th}$-powers of the elements of $\Gamma$. Then $\widetilde{{\mathcal X}}={\mathbb H}^{2}/\Gamma_{q}$ is a compact Riemann surface of genus $\widetilde{g}=1+q^{2g}(g-1)$, called the $q$-\textit{homology cover} of ${\mathcal X}$. 
The abelian group $\widetilde{N}=\Gamma/\Gamma_{q} \cong H_{1}({\mathcal X};{\mathbb Z}_{q})\cong {\mathbb Z}_{q}^{2g}$, of conformal automorphisms of $\widetilde{{\mathcal X}}$, is  called the associated $q$-\textit{homology group}. In this case, ${\mathcal X}=\widetilde{{\mathcal X}}/\widetilde{N}$. Let $\pi_{\widetilde{N}}:\widetilde{{\mathcal X}} \to {\mathcal X}$ be the regular covering map with ${\rm deck}(\pi_{\widetilde{N}})=\widetilde{N}$. In \cite{Hidalgo:homology} it was proved that: (i) $\widetilde{N}$ is a normal subgroup of ${\rm Aut}(\widetilde{{\mathcal X}})$, and (ii) if ${\mathcal X}$ admits no conformal automorphisms of order a prime divisor of $q$, then $\widetilde{N}$ is the unique subgroup of ${\rm Aut}(\widetilde{\mathcal X})$ acting freely and isomorphic to ${\mathbb Z}_{q}^{2g}$. In particular, in this sitaution, if $q$ is a prime integer, then 
every $q$-subgroup of ${\rm Aut}(\widetilde{{\mathcal X}})$ is a subgroup of $\widetilde{N}$. 
As $\Gamma_{q}$ is a characteristic subgroup of $\Gamma$, for every $\Phi \in {\rm Aut}({\mathcal X})$ there is some $\widetilde{\Phi} \in {\rm Aut}(\widetilde{{\mathcal X}})$ such that $\pi_{\widetilde{N}} \circ \widetilde{\Phi}=\Phi \circ \pi_{\widetilde{N}}$. This gives a short exact sequence
\begin{equation} \label{eq:rho}
1 \to \widetilde{N} \to {\rm Aut}(\widetilde{{\mathcal X}}) \stackrel{\rho}{\to} {\rm Aut}({\mathcal X}) \to 1,
\end{equation}
where $\pi_{\widetilde{N}} \circ \widetilde{\Phi}=\rho(\widetilde{\Phi)} \circ  \pi_{\widetilde{N}}$ for each $\widetilde{\Phi} \in {\rm Aut}(\widetilde{{\mathcal X}})$.

The $q$-homology cover of ${\mathcal X}$ is its maximal cover with deck group isomorphic to ${\mathbb Z}_{q}^{n}$ for some positive $n$, as the next result shows.

\begin{thm}[\cite{Hidalgo:homology}]\label{teo:cubriente}
Let ${\mathcal S}$ be a compact Riemann surface, $F<{\rm Aut}({\mathcal S})$, $F \cong {\mathbb Z}_{q}^{n}$ acting freely on ${\mathcal S}$, and $P:{\mathcal S} \to {\mathcal X}$ a regular (unbranched) covering map with ${\rm deck}(P)=F$. Then there exist $\widetilde{L}<\widetilde{N}$ and a regular covering map $\pi_{\widetilde{L}}:\widetilde{{\mathcal X}} \to S$ with ${\rm deck}(\pi_{\widetilde{L}})=\widetilde{L}$, such that $\pi_{\widetilde{N}}=P \circ \pi_{\widetilde{L}}$; in particular, $F=\widetilde{N}/\widetilde{L}$.
\end{thm}

Particularizing Theorem \ref{teo:cubriente} to the cyclic situation ($n=1$) gives the following fact.

\begin{cor} \label{cor:cyclic}
Let ${ {\psi}} : {\mathcal Y} \to {\mathcal X}$ be a $q$-cyclic \'etale cover of ${\mathcal X}$. Then there exists a  maximal subgroup $\widetilde{L}$ of $\widetilde{N}$  such that ${\mathcal Y} = \widetilde{{\mathcal X}}/\widetilde{L}$.	
\end{cor}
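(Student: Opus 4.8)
The plan is to read off Corollary \ref{cor:cyclic} as the case $n=1$ of Theorem \ref{teo:cubriente}, the only additional work being to verify that the subgroup produced is in fact \emph{maximal}. First I would unravel the hypothesis: a $q$-cyclic \'etale cover $\psi : \mathcal Y \to \mathcal X$ is, by definition, a regular unbranched covering map whose deck group $F = \mathrm{deck}(\psi)$ is cyclic of order $q$; being the deck group of a regular unbranched cover, $F$ acts freely on $\mathcal Y$ with $\mathcal X = \mathcal Y/F$. Thus the triple $(\mathcal S, F, P) := (\mathcal Y, F, \psi)$ satisfies exactly the hypotheses of Theorem \ref{teo:cubriente} with $n=1$ and $F \cong \mathbb Z_q$.

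Next I would apply Theorem \ref{teo:cubriente} to this data. It yields a subgroup $\widetilde L < \widetilde N$ together with a regular covering $\pi_{\widetilde L} : \widetilde{\mathcal X} \to \mathcal Y$ satisfying $\mathrm{deck}(\pi_{\widetilde L}) = \widetilde L$ and $\pi_{\widetilde N} = \psi \circ \pi_{\widetilde L}$. In particular $\mathcal Y = \widetilde{\mathcal X}/\widetilde L$, which is the asserted identification, and the theorem simultaneously records the quotient isomorphism $\widetilde N / \widetilde L = F \cong \mathbb Z_q$.

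It remains to check that $\widetilde L$ is a maximal subgroup of $\widetilde N$, and this is the only point where the standing assumption that $q$ is prime is used. From $\widetilde N/\widetilde L \cong \mathbb Z_q$ the index $[\widetilde N : \widetilde L]$ equals $q$, a prime. For any intermediate group $\widetilde L \le M \le \widetilde N$, the tower formula $[\widetilde N : M]\,[M : \widetilde L] = q$ forces one of the two factors to equal $1$, so $M = \widetilde L$ or $M = \widetilde N$; hence $\widetilde L$ is maximal. (Equivalently, since $\widetilde N \cong \mathbb Z_q^{2g}$ is an $\mathbb F_q$-vector space when $q$ is prime, $\widetilde L$ is precisely a hyperplane, i.e. a maximal subspace.) I do not expect a genuine obstacle: the substantive content is already contained in Theorem \ref{teo:cubriente}, and the maximality is the elementary prime-index observation above. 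The only step requiring a word of care is confirming that the deck group of an \'etale $q$-cyclic cover acts freely, so that Theorem \ref{teo:cubriente} applies verbatim.
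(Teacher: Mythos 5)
Your proposal is correct and follows essentially the same route as the paper, which obtains Corollary \ref{cor:cyclic} precisely by specializing Theorem \ref{teo:cubriente} to the case $n=1$, $F\cong{\mathbb Z}_{q}$. Your explicit verification that $\widetilde{L}$ is maximal (prime index $q$ forces maximality) simply spells out a point the paper leaves implicit.
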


\begin{rem}
Let us consider the homology cover ${\mathcal X}_{h} := \mathbb{H}/\Gamma'$. Then ${\mathcal X}_{h} \to {\mathcal X}$ is a Galois cover, with deck group (isomorphic to) $H_1({\mathcal X}, \mathbb{Z})$,  see for instance \cite{Maskit}. Let us start by noting that 
every $q$-cyclic \'etale  cover ${\mathcal Y} \to {\mathcal X}$ corresponds to a surjective homomorphism
$\theta_{\mathcal Y} : H_1({\mathcal X}, \mathbb{Z}) \rightarrow \mathbb{Z}_q$. In this setting, ${\mathcal X}_h$ is a Galois cover of ${\mathcal Y}$ with deck group $\ker(\theta_{\mathcal Y})$.
Observe that post-composing $\theta_{\mathcal Y}$ with an automorphism of $\mathbb{Z}_q$ keeps invariant $\ker(\theta_{\mathcal Y})$.
Fixing an (ordered) basis $\{ (a_1 , \ldots , a_{2g}) \}$ for $H_1({\mathcal X}, \mathbb{Z})$ and a generator $\gamma$ for $\mathbb{Z}_q$, by  applying $\theta_{\mathcal Y}$ to each $a_j$: $\theta_{\mathcal Y}(a_j) = \gamma^{l_j}$ and considering the $2g$-tuple $(l_1 , \ldots , l_{2g})$, we obtain a point in $\mathbb{Z}_q^{2g} \setminus \{0\}$ (note that the maximum common divisor of all these values $l_{j}$ and $q$ is equal to $1$). Post-composition with an automorphism of $\mathbb{Z}_q$ gives a multiple of $(l_1 , \ldots , l_{2g})$. This process thus assigns to the  $q$-\'etale cyclic cover ${\mathcal Y} \to {\mathcal X}$ the (cyclic of order $q$) subgroup  of  $\mathbb{Z}_q^{2g}$ generated by   $(l_1 , \ldots , l_{2g})$. Conversely, given a cyclic subgroup of order $q$ of  $\mathbb{Z}_q^{2g}$, say generated by  $(l_1 , \ldots , l_{2g})$ in  $\mathbb{Z}_q^{2g} \setminus \{0\}$, we define $\theta : H_1({\mathcal X}, \mathbb{Z}) \rightarrow \mathbb{Z}_q$ 
by $\theta(a_j) = \gamma^{l_j}$, and obtain the $q$-cyclic \'etale cover ${\mathcal Y}:={\mathcal X}_h/\ker(\theta) \to {\mathcal X}$.
\end{rem}

\subsection{An isogenous  decomposition of the Prym variety associated to the $q$-homology cover}
Let us now assume $q \geq 2$ to be a prime integer.
It is well known that each $q$-cyclic \'etale  cover ${ {\psi}} :{\mathcal Y} \to {\mathcal X}$ is naturally associated to a subgroup $\ker{{\psi}} ^* \cong {\mathbb Z}_{q}$ of the group $J{\mathcal X}[q]$ of points of order $q$ in the Jacobian variety $J{\mathcal X}$ of ${\mathcal X}$ (and conversely), where ${{\psi}} ^* : J{\mathcal X} \to J{\mathcal Y}$ is the pull-back map. 
Thus, the associated $q$-homology group $\widetilde{N}$ may be identified canonically with the group $J{\mathcal X}[q]$ of points of order $q$ in the Jacobian variety $J{\mathcal X}$ of ${\mathcal X}$. In this way, the $q$-homology cover $\widetilde{{\mathcal X}}$ maybe thought of as the curve where $J{\mathcal X}[q]$ acts with quotient ${\mathcal X}$.
This relationship is made clearer in the following result, where we also give the isogenous decomposition of $J\widetilde{{\mathcal X}}$ with respect to the action of $\widetilde{N}$.  Let us recall that the number of  maximal  subgroups of $\widetilde{N} \cong {\mathbb Z}_{q}^{2g}$ is $m_{q,g} = \dfrac{q^{2g}-1}{q-1}$.

\begin{thm} \label{producto1}
Let ${\mathcal X}$ be a compact Riemann surface of genus $g \geq 2$, and consider its $q$-homology cover $\widetilde{{\mathcal X}}$, with associated $q$-homology group $\widetilde{N} \cong \mathbb{Z}_q^{2g} \leq {\rm Aut}(\widetilde{{\mathcal X}}).$

Let $\mathcal{L} =  \{\widetilde{L}_1 , \ldots , \widetilde{L}_{m_{q,g}}\}$ be the set of  maximal  subgroups of $\widetilde{N}. $ For each $\widetilde{L}_{j} \in {\mathcal L}$ consider  ${\mathcal Y}_j := \widetilde{{\mathcal X}}/\widetilde{L}_j$ and $P({\mathcal Y}_{j}/{\mathcal X})$ the  Prym variety associated to the corresponding $q$-cyclic \'etale cover 
 $\psi_{j}:{\mathcal Y}_{j} \to {\mathcal X}$. \\
Then 	the isotypical decomposition of $J\widetilde{{\mathcal X}}$ with respect to the action of $\widetilde{N}$ is given by 
$$
J\widetilde{{\mathcal X}} \cong_{\textup{isog}} J{\mathcal X} \times     \prod_{j=1}^{m_{q,g}} P({\mathcal Y}_j/{\mathcal X})
  \ \ \text{  and }  \ \   P(\widetilde{{\mathcal X}}/{\mathcal X}) \cong_{\textup{isog}} \prod_{j=1}^{m_{q,g}} P({\mathcal Y}_j/{\mathcal X}).
$$

\end{thm}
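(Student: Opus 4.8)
The plan is to use the standard representation-theoretic approach to isotypical decompositions of Jacobians with group action, applied to the abelian group $\widetilde{N} \cong \mathbb{Z}_q^{2g}$ acting on $J\widetilde{\mathcal{X}}$. Since $\widetilde{N}$ is abelian, its complex irreducible representations are all one-dimensional and are indexed by the characters $\chi \in \mathrm{Hom}(\widetilde{N}, \mathbb{C}^*)$, of which there are $q^{2g}$, including the trivial one. The general theory (as in the references \cite{bl}) gives an isotypical decomposition $J\widetilde{\mathcal{X}} \cong_{\mathrm{isog}} \prod_{\chi} B_\chi$ where the factor associated to the trivial character is precisely $J\mathcal{X} = J(\widetilde{\mathcal{X}}/\widetilde{N})$. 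The whole task is then to organize the nontrivial characters and identify the remaining factors with the Prym varieties $P(\mathcal{Y}_j/\mathcal{X})$.

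First I would recall that for each nontrivial character $\chi$, its kernel is a subgroup of index $q$ in $\widetilde{N}$, hence a maximal subgroup $\widetilde{L}_j$. Conversely each maximal subgroup $\widetilde{L}_j$ is the kernel of exactly $q-1$ nontrivial characters (those that factor through the order-$q$ quotient $\widetilde{N}/\widetilde{L}_j$ and are injective on it). Grouping the $q^{2g}-1$ nontrivial characters by their kernels therefore partitions them into $m_{q,g} = (q^{2g}-1)/(q-1)$ blocks of size $q-1$, one block per maximal subgroup. The key point I would then establish is that the isotypical component of $J\widetilde{\mathcal{X}}$ collecting the characters in the block attached to $\widetilde{L}_j$ is isogenous to the Prym variety $P(\mathcal{Y}_j/\mathcal{X})$ of the intermediate cover $\mathcal{Y}_j = \widetilde{\mathcal{X}}/\widetilde{L}_j$.

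To make this identification precise, I would use the intermediate surface $\mathcal{Y}_j$ and the factorization $\widetilde{\mathcal{X}} \to \mathcal{Y}_j \to \mathcal{X}$. Applying the same isotypical machinery to the $\mathbb{Z}_q$-action of $\widetilde{N}/\widetilde{L}_j$ on $J\mathcal{Y}_j$ gives $J\mathcal{Y}_j \cong_{\mathrm{isog}} J\mathcal{X} \times P(\mathcal{Y}_j/\mathcal{X})$, which is the defining isogeny decomposition of the Prym of a $q$-cyclic cover. The factor $P(\mathcal{Y}_j/\mathcal{X})$ is exactly the sum of the $q-1$ nontrivial $\chi$-isotypical pieces that descend to $\mathcal{Y}_j$, i.e. those in the block of $\widetilde{L}_j$; this is because a $\chi$-piece of $J\widetilde{\mathcal{X}}$ descends to $J\mathcal{Y}_j$ exactly when $\widetilde{L}_j \subseteq \ker\chi$. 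Assembling all blocks yields the first displayed isogeny, and discarding the trivial-character factor $J\mathcal{X}$ (which corresponds to the pullback of $J\mathcal{X}$ and is the complement of $P(\widetilde{\mathcal{X}}/\mathcal{X})$ in $J\widetilde{\mathcal{X}}$) yields the second.

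The main obstacle I anticipate is the bookkeeping needed to show that the isotypical blocks glue correctly and that each block is isogenous to the full Prym $P(\mathcal{Y}_j/\mathcal{X})$ rather than to some proper subfactor, since a priori several maximal subgroups could share characters or the descent could overcount. The subtle point is precisely that each nontrivial character has a \emph{unique} kernel, so the blocks are disjoint and exhaust the nontrivial characters exactly once; combined with the fact that distinct maximal subgroups give genuinely different intermediate covers, this guarantees the product over all $m_{q,g}$ Pryms has the correct total dimension and is a true isogeny, not merely an isogeny onto a subvariety. Verifying the dimension count $\dim J\widetilde{\mathcal{X}} = g + m_{q,g}\dim P(\mathcal{Y}_j/\mathcal{X})$ against $\widetilde{g} = 1 + q^{2g}(g-1)$ provides a useful consistency check that the decomposition is complete.
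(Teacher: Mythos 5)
Your proposal is correct and follows essentially the same route as the paper's proof: an $\widetilde{N}$-equivariant isotypical decomposition of $J\widetilde{{\mathcal X}}$, with the nontrivial characters grouped by their kernels (equivalently, into Galois orbits forming the rational irreducible representations, each block of $q-1$ characters corresponding to one maximal subgroup $\widetilde{L}_j$), and each block's component identified with $P({\mathcal Y}_j/{\mathcal X})$ via the intermediate cover $\widetilde{{\mathcal X}} \to {\mathcal Y}_j \to {\mathcal X}$. The only difference is that the paper outsources the two key steps to the cited results of \cite{CR} and \cite{CLR3} (Corollary 4.2 and Theorem 5.1), whereas you sketch the same identifications directly.
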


\begin{proof}
Since $\pi_{\widetilde{N}}^*(J{\mathcal X}) \cong_{\textup{isog}} J{\mathcal X}$, then 
$J\widetilde{{\mathcal X}} \cong_{\textup{isog}} J{\mathcal X} \times    P(\widetilde{{\mathcal X}}/{\mathcal X})$.
Observe that this first decomposition is also $\widetilde{N}$-equivariant, since  $\pi_{\widetilde{N}}^*(J{\mathcal X})$ is the maximal abelian subvariety of $J\widetilde{{\mathcal X}}$ where the trivial representation $\chi_0$ of $\widetilde{N}$ acts, and therefore all non-trivial rational irreducible  representations $W$ of $\widetilde{N}$ act on the complementary subvariety $P(\widetilde{{\mathcal X}}/{\mathcal X})$ of $\pi_{\widetilde{N}}^*(J{\mathcal X})$ in $J\widetilde{{\mathcal X}}$. 
Furthermore, it is well known (see for instance \cite{CR}) that then there is an isogenous $\widetilde{N}$-equivariant decomposition  
 $$
P(\widetilde{{\mathcal X}}/{\mathcal X}) \cong_{\textup{isog}}  \prod_{W \in {\rm Irr}_{\mathbb{Q}}(\widetilde{N}), W\neq \chi_0} A_W ,
$$
where $\widetilde{N}$ acts on each canonically defined  abelian subvariety $A_W$ of  $P(\widetilde{{\mathcal X}}/{\mathcal X})$ by (an appropriate multiple of) the representation $W$. 
Since $\widetilde{N} \cong \mathbb{Z}_q^{2g}$ is an abelian group, a precise description of each $A_W$ is given as follows. 
According to \cite{CLR3} Corollary 4.2, each non trivial complex irreducible linear character $V$ of $\widetilde{N}$ is defined over the field $\mathbb{Q}[w_q]$, with $w_q$ a primitive $q$-th root of unity, and each non trivial rational irreducible  representation $W$ of $\widetilde{N}$ is given by 
$$
W = \bigoplus_{\sigma \in {\rm Gal}(\mathbb{Q}[w_q]/\mathbb{Q})} V^{\sigma} , 
$$
for some  non trivial linear character $V$ of $\widetilde{N}$ and its Galois conjugates $V^{\sigma}$. Furthermore, all characters $V^{\sigma}$ share the same kernel $\widetilde{L}$,  a subgroup of  $\widetilde{N}$ with cyclic quotient; thus, in our case, a  maximal  subgroup of $\widetilde{N}$, that is,  $\widetilde{L} \in {\mathcal L}$. Conversely, to each  maximal  subgroup $\widetilde{L} \in {\mathcal L}$  there corresponds a non trivial rational irreducible  representation $W$ of $\widetilde{N}$.
Furthermore, it follows from Theorem 5.1 in \cite{CLR3} that in our case
$$
A_W = \pi_L^*(P({\mathcal Y}_{\widetilde{L}}/{\mathcal X})) \cong_{\textup{isog}} P({\mathcal Y}_{\widetilde{L}}/{\mathcal X}),
$$
where $\pi_{\widetilde{L}} : \widetilde{{\mathcal X}} \to {\mathcal Y}_{\widetilde{L}} := \widetilde{{\mathcal X}}/\widetilde{L}$ is the corresponding \'etale cover, thus concluding the proof.
\end{proof}

\section{The case of $p$-cyclic gonal curves}
In this section $q \geq 2$ is still a prime integer, and we assume ${\mathcal X}$ to be a cyclic $p$-gonal curve of genus $g \geq 2$, with $p \geq 3$  a prime  integer such that $ p \ne q$. In this way, ${\mathcal X}$ admits a conformal automorphism $\tau$ of order $p$ such that the Riemann orbifold ${\mathcal X}/\langle \tau \rangle$ has genus zero. By the Riemann-Hurwitz formula, the number of fixed points of $\tau$ is some integer $r \geq 3$ such that $g=(p-1)(r-2)/2$. 

Gilman observed in \cite{Gilman} that  there exists a nice basis for $H_{1}({\mathcal X};{\mathbb Z})$, called an adapted basis for the action of  the automorphism $\tau$, as follows.

\begin{lem}[Adapted basis \cite{Gilman}]\label{lema1}
Let ${\mathcal X}$ be a closed Riemann surface of genus $g=(r-2)(p-1)/2$, where $r \geq 3$ and $p$ is a prime integer.
If $\tau$ is an order $p$ conformal automorphism of ${\mathcal X}$ with exactly $r$ fixed points, and $\tau_{*}$ is the induced action of $\tau$ on $H_{1}({\mathcal X};{\mathbb Z})$, then 
there exists a basis $\left\{ a_{j, i } \right\}_{\substack{j =1, \dots, r-2\\i = 1, \dots, p -1  }}$ of $H_{1}({\mathcal X};{\mathbb Z})$ (it might not be a canonical one) such that 
$$\tau_{*}(a_{j,  1})=a_{j, 2}, \tau_{*}(a_{j, 2})=a_{j, 3}, \ldots, \tau_{*}(a_{j, p-2})=a_{j, p-1}, \tau_{*}(a_{j, p-1})=a_{j, p}, \tau_{*}(a_{j, p})=a_{j, 1} $$
where  $a_{j, p}=(a_{j, 1}a_{j, 2}\cdots a_{j, p-1})^{-1}$,  for each  $j=1,\ldots, r-2$.
\\
\end{lem}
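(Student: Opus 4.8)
The plan is to recognize the statement as an assertion about the module structure of $H_{1}({\mathcal X};{\mathbb Z})$ over the group ring $R:={\mathbb Z}[\langle\tau\rangle]\cong{\mathbb Z}[t]/(t^{p}-1)$, where $t$ acts as $\tau_{*}$. Writing $\Phi_{p}(t)=1+t+\cdots+t^{p-1}$ for the $p$-th cyclotomic polynomial and $S:={\mathbb Z}[t]/(\Phi_{p}(t))\cong{\mathbb Z}[\zeta_{p}]$ with $\zeta_{p}$ a primitive $p$-th root of unity, the desired conclusion is exactly that $H_{1}({\mathcal X};{\mathbb Z})$ is a free $S$-module of rank $r-2$. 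Indeed, if $\{v_{1},\dots,v_{r-2}\}$ is such a basis, then setting $a_{j,i}:=\tau_{*}^{\,i-1}(v_{j})=\zeta_{p}^{\,i-1}v_{j}$ for $1\le i\le p-1$ and using that $\{1,\zeta_{p},\dots,\zeta_{p}^{p-2}\}$ is a ${\mathbb Z}$-basis of $S$ produces a ${\mathbb Z}$-basis of $H_{1}({\mathcal X};{\mathbb Z})$; the identities $\zeta_{p}^{p}=1$ and $\zeta_{p}^{p-1}=-(1+\zeta_{p}+\cdots+\zeta_{p}^{p-2})$ then give precisely the cyclic relations together with $a_{j,p}=(a_{j,1}\cdots a_{j,p-1})^{-1}$ (additively, $-\sum_{i<p}a_{j,i}$) claimed in the lemma.

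First I would confirm that $H_{1}({\mathcal X};{\mathbb Z})$ genuinely is an $S$-module, i.e. that the norm element $N:=\Phi_{p}(t)$ annihilates it. Over ${\mathbb Q}$ the group ring splits as ${\mathbb Q}\times{\mathbb Q}(\zeta_{p})$, and the $\tau_{*}$-invariant (trivial) summand of $H_{1}({\mathcal X};{\mathbb Q})$ is isomorphic to $H_{1}({\mathcal X}/\langle\tau\rangle;{\mathbb Q})=H_{1}({\mathbb P}^{1};{\mathbb Q})=0$ since the quotient has genus zero; hence $N$ acts as zero on $H_{1}({\mathcal X};{\mathbb Q})$, and therefore also on the torsion-free lattice $H_{1}({\mathcal X};{\mathbb Z})$.

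The heart of the matter is freeness, and for this I would pass to the unramified cover. Let ${\mathcal X}^{*}\to\Sigma_{0}$ be the cyclic ${\mathbb Z}_{p}$-cover obtained by deleting the $r$ fixed points of $\tau$ upstairs and the $r$ branch points downstairs, so that $\Sigma_{0}$ is an $r$-punctured sphere, $\pi_{1}(\Sigma_{0})$ is free of rank $r-1$, and the monodromy sends the $i$-th loop to $t^{n_{i}}$ with every $n_{i}\not\equiv 0 \pmod p$. Lifting a one-vertex, $(r-1)$-edge model of $\Sigma_{0}$ gives an equivariant cellular chain complex and a short exact sequence of $R$-modules $0\to H_{1}({\mathcal X}^{*})\to R^{\,r-1}\xrightarrow{\;\partial\;} I\to 0$, where $\partial$ sends the $i$-th generator to $t^{n_{i}}-1$ and $I=(t-1)$ is the augmentation ideal. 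Filling the punctures back in yields a second exact sequence $0\to K\to H_{1}({\mathcal X}^{*})\to H_{1}({\mathcal X})\to 0$; since $\tau$ fixes each puncture it acts trivially on the subgroup $K$ spanned by the loops around them, and a direct computation identifies $K$ with $N\,R^{\,r-1}=\ker\bigl(R^{\,r-1}\to S^{\,r-1}\bigr)$. Consequently $H_{1}({\mathcal X})$ is the image of $H_{1}({\mathcal X}^{*})=\ker\partial$ in $S^{\,r-1}$.

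It remains to compute this image and check that it is free, and here the main obstacle is the Steinitz obstruction: over the Dedekind domain $S$ a finitely generated torsion-free module of rank $r-2$ is a priori only projective, so a nontrivial ideal class must be excluded. I would resolve this together with the computation as follows. Tensoring $0\to I\to R\to{\mathbb Z}\to0$ with $S$ and using the standard periodic resolution of ${\mathbb Z}$ over $R$ gives $\mathrm{Tor}_{1}^{R}({\mathbb Z},S)=0$ (the relevant differential reduces to multiplication by the non-zero-divisor $\zeta_{p}-1$), so $I\otimes_{R}S\hookrightarrow S$ is injective with image the prime ${\mathfrak p}=(\zeta_{p}-1)$; feeding this into the $S$-tensored first sequence identifies the image of $\ker\partial$ in $S^{\,r-1}$ with $\ker\bigl(\bar\partial\colon S^{\,r-1}\to S\bigr)$, where $\bar\partial$ sends the $i$-th generator to $\zeta_{p}^{n_{i}}-1$. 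Finally, each $\zeta_{p}^{n_{i}}-1$ equals $(\zeta_{p}-1)$ times the cyclotomic unit $1+\zeta_{p}+\cdots+\zeta_{p}^{\,n_{i}-1}$, so $\bar\partial$ is $(\zeta_{p}-1)$ times a split surjection $S^{\,r-1}\to S$ and its kernel is free of rank $r-2$. This gives $H_{1}({\mathcal X};{\mathbb Z})\cong S^{\,r-2}$, and the recipe of the first paragraph produces the adapted basis.
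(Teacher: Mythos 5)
Your argument is correct, but there is nothing in the paper to compare it with step by step: the paper gives no proof of this lemma at all, stating it as a quoted result of Gilman \cite{Gilman}, whose original derivation proceeds essentially through the integral representation theory of the prime-order cyclic group (matrix representations of $\mathbb{Z}[\mathbb{Z}_{p}]$-lattices). Your proof is therefore a genuinely different, self-contained route: you correctly recast the lemma as the statement that $H_{1}({\mathcal X};{\mathbb Z})$ is a free module of rank $r-2$ over $S={\mathbb Z}[\zeta_{p}]$, and you establish this by a chain-level computation on the unramified ${\mathbb Z}_{p}$-cover of the $r$-punctured sphere. The delicate points all check out: the image of $\partial$ is the full augmentation ideal because each $n_{i}$ is prime to $p$, so the single ideal $(t^{n_{i}}-1)$ already equals $I$; the subgroup killed by filling in the punctures is exactly $NR^{\,r-1}$, since the boundary class at the $i$-th puncture upstairs is the lift of $\gamma_{i}^{p}$, i.e.\ $Ne_{i}$, and the $r$-th boundary class is minus the sum of the others; the Tor computation legitimately identifies the image of $\ker\partial$ in $S^{\,r-1}$ with $\ker\bar\partial$ (here you could even avoid Tor: if $\bar\partial\bar{x}=0$ for a lift $x$, then $\partial x\in NR\cap I$, and $NR\cap I=0$ because the augmentation of $Nm$ is $pm$); and the cyclotomic units $(\zeta_{p}^{n_{i}}-1)/(\zeta_{p}-1)$ make $\bar\partial$ equal to $\zeta_{p}-1$ times a split surjection, so its kernel is stably free, hence free over the Dedekind domain $S$ — indeed one has the explicit basis $e_{i}-u_{1}^{-1}u_{i}e_{1}$, $i\geq 2$. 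What your approach buys is independence from the general classification of $\mathbb{Z}[\mathbb{Z}_{p}]$-lattices and a transparent reason why the Steinitz obstruction vanishes (total ramification forces every local monodromy to be prime to $p$, producing units); what the citation buys the authors is brevity and a statement already packaged in the adapted-basis form, with the relation $a_{j,p}=(a_{j,1}\cdots a_{j,p-1})^{-1}$, that is fed directly into Lemma \ref{leman1} and the counting argument of Lemma \ref{lema7}.
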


As before, $\widetilde{{\mathcal X}}$ is the  $q$-homology cover of ${\mathcal X}$ and $\widetilde{N}<{\rm Aut}(\widetilde{{\mathcal X}})$ is the associated $q$-homology group.
We now show that the automorphism of order $p$ of ${\mathcal X}$ may be lifted to $\widetilde{{\mathcal X}}$. 

\begin{lem}\label{lema2}
Let $ \tau $ be an automorphism of order $p$ of ${\mathcal X}$. Then there exists $\widetilde{\Phi}\in {\rm Aut}(\widetilde{{\mathcal X}})$ of order $p$ such that $\pi_{\widetilde{N}} \circ {\widetilde\Phi}=\tau \circ \pi_{\widetilde{N}}$.
\end{lem}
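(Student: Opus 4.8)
The plan is to exploit the short exact sequence \eqref{eq:rho}: it already provides \emph{some} lift of $\tau$, so the only real work is to arrange that the lift has order exactly $p$, and this is where the hypothesis $p\neq q$ enters decisively. Concretely, since $\rho$ is surjective I would pick any $\widetilde{\Phi}_0\in{\rm Aut}(\widetilde{{\mathcal X}})$ with $\rho(\widetilde{\Phi}_0)=\tau$; by the defining relation in \eqref{eq:rho} this already satisfies $\pi_{\widetilde{N}}\circ\widetilde{\Phi}_0=\tau\circ\pi_{\widetilde{N}}$. Such a $\widetilde{\Phi}_0$ need not have order $p$, but since $\rho(\widetilde{\Phi}_0^{\,p})=\tau^p={\rm id}$ we at least know that $\widetilde{\Phi}_0^{\,p}\in\ker\rho=\widetilde{N}\cong{\mathbb Z}_q^{2g}$.

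The cleanest way to finish is a Schur--Zassenhaus argument. I would set $G:=\rho^{-1}(\langle\tau\rangle)$, a finite group fitting in the extension
\begin{equation*}
1 \to \widetilde{N} \to G \xrightarrow{\ \rho\ } \langle\tau\rangle \to 1
\end{equation*}
with $|\widetilde{N}|=q^{2g}$ and $|\langle\tau\rangle|=p$. Because $p$ and $q$ are distinct primes, $\gcd(q^{2g},p)=1$, so Schur--Zassenhaus yields a complement $H\le G$, necessarily cyclic of order $p$ and carried isomorphically onto $\langle\tau\rangle$ by $\rho$. Letting $\widetilde{\Phi}\in H$ be the unique element with $\rho(\widetilde{\Phi})=\tau$ then produces an automorphism of order $p$ satisfying the required identity.

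For a more hands-on version I would instead correct $\widetilde{\Phi}_0$ inside its $\widetilde{N}$-coset. Writing $\sigma\in{\rm Aut}(\widetilde{N})$ for conjugation by $\widetilde{\Phi}_0$ (an ${\mathbb F}_q$-linear map with $\sigma^p={\rm id}$, since $\sigma^p$ is conjugation by $\widetilde{\Phi}_0^{\,p}\in\widetilde{N}$ and $\widetilde{N}$ is abelian) and $n_0:=\widetilde{\Phi}_0^{\,p}$, an easy induction gives, additively,
\begin{equation*}
(n\,\widetilde{\Phi}_0)^p=\Big(\textstyle\sum_{i=0}^{p-1}\sigma^i(n)\Big)+n_0 \qquad (n\in\widetilde{N}).
\end{equation*}
One checks $\sigma(n_0)=n_0$, so $n_0\in\widetilde{N}^{\sigma}$; and since $p\neq q$ the norm operator $\sum_{i=0}^{p-1}\sigma^i$ vanishes on every non-trivial isotypic piece of $\sigma$ and is multiplication by $p$ (hence invertible) on $\widetilde{N}^{\sigma}$, so its image is exactly $\widetilde{N}^{\sigma}$. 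Thus $\sum_{i=0}^{p-1}\sigma^i(n)=-n_0$ has a solution $n$, and $\widetilde{\Phi}:=n\,\widetilde{\Phi}_0$ satisfies $\widetilde{\Phi}^{\,p}={\rm id}$.

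In either approach the main obstacle is purely the control of the order: existence of a lift is free from \eqref{eq:rho}, and the entire content is to kill the discrepancy $\widetilde{\Phi}_0^{\,p}\in\widetilde{N}$, which is possible precisely because $\gcd(p,q)=1$. Since $\rho(\widetilde{\Phi})=\tau$ has order $p$, the order of $\widetilde{\Phi}$ is a multiple of $p$, so $\widetilde{\Phi}^{\,p}={\rm id}$ forces it to be exactly $p$. In the final write-up I would take Schur--Zassenhaus as the proof and keep the explicit norm computation as a remark, since the latter makes transparent why the coprimality of $p$ and $q$ is essential.
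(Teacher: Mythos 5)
Your proposal is correct, and both of your arguments (Schur--Zassenhaus, and the explicit norm computation, which is just the abelian-kernel case of Schur--Zassenhaus made concrete) are valid; but the route is genuinely different from, and heavier than, the paper's. The paper's proof is a two-line power trick: take any lift $\widetilde{\tau}$ with $\rho(\widetilde{\tau})=\tau$; if its order is not $p$, then $\widetilde{\tau}^{p}\in\widetilde{N}$ forces $(\widetilde{\tau}^{q})^{p}=(\widetilde{\tau}^{p})^{q}=1$, so $\widetilde{\tau}^{q}$ has order exactly $p$ because it projects to $\tau^{q}\neq 1$. The trade-off is worth noting: the paper's element $\widetilde{\tau}^{q}$ projects to $\tau^{q}$, not to $\tau$, so as literally written it proves the lemma only up to replacing $\tau$ by the generator $\tau^{q}$ of $\langle\tau\rangle$ (this is harmless for all later uses, and is repaired by taking the further power $(\widetilde{\tau}^{q})^{m}$ with $qm\equiv 1 \bmod p$, which still has order $p$ and projects to $\tau$). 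Your argument, by contrast, produces a lift of $\tau$ itself on the nose: the complement $H\leq\rho^{-1}(\langle\tau\rangle)$ maps isomorphically onto $\langle\tau\rangle$, so every power of $\tau$, in particular $\tau$, has an order-$p$ preimage. So what you lose in brevity you gain in precision, and your norm computation
$(n\,\widetilde{\Phi}_0)^{p}=\bigl(\sum_{i=0}^{p-1}\sigma^{i}(n)\bigr)+n_0$
together with the semisimplicity of $\widetilde{N}$ as an ${\mathbb F}_q[{\mathbb Z}_p]$-module makes completely transparent where $\gcd(p,q)=1$ is used --- the same coprimality that the paper exploits, but silently, through the identity $(\widetilde{\tau}^{p})^{q}=1$ and the fact that $\tau^{q}$ still has order $p$.
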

\begin{proof}
We know the existence of some $\widetilde{\tau} \in {\rm Aut}(\widetilde{{\mathcal X}})$ such that $\pi_{N} \circ \widetilde{\tau}=\tau \circ \pi_{N}$ (that is, $\rho(\widetilde{\tau})=\tau$, with $\rho$ as in  \eqref{eq:rho}). If $\widetilde{\tau}$ has order $p$, then we are done, with 
$\widetilde{\Phi} = \widetilde{\tau}.$  If it does not, then as $\rho(\widetilde{\tau}^{p})=\tau^{p}=1$, it follows that $\widetilde{\tau}^{p} \in \widetilde{N}$. Hence $(\widetilde{\tau}^{q})^{p}=(\widetilde{\tau}^{p})^{q}=1$ and $\rho(\widetilde{\tau}^{q})=\tau^{q}$, which has order $p$, since $p \neq q$. Then  $\widetilde{\Phi} = \widetilde{\tau}^{q}$ has order $p.$
\end{proof}

If we denote by $\widetilde{P} := \langle \widetilde{\Phi} \rangle \cong {\mathbb Z}_{p}$, then  we have that $\widetilde{G}:=\langle \widetilde{N}, \widetilde{\Phi} \rangle = \widetilde{N} \rtimes \widetilde{P}$,  since  $\widetilde{N} \unlhd {\rm Aut}(\widetilde{{\mathcal X}}). $ Also,  the conjugation action of $\widetilde{\Phi} $ on $\widetilde{N}$ is the one induced by the action of $\tau$ in $H_{1}({\mathcal X};{\mathbb Z})$. In particular, Lemma \ref{lema1} asserts the following fact.

\begin{lem} \label{leman1}
There exists a set of generators $\left\{ n_{j, i } \right\}_{\substack{j =1, \dots, r-2\\i = 1, \dots, p -1  }} $ of $\widetilde{N}$  such that
$$\widetilde{\Phi}(n_{j,1})=n_{j, 2}, \widetilde{\Phi}(n_{j, 2})=n_{j, 3}, \ldots,  \widetilde{\Phi}(n_{j, p-1})=n_{j, p}, \widetilde{\Phi}(n_{j, p})=n_{j, 1}$$	
where  $n_{j, p}=(n_{j, 1}n_{j, 2}\cdots n_{j, p-1})^{-1}$, 
for each 
$ j=1,\ldots, r-2$.
\end{lem}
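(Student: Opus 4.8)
The plan is to transport Gilman's adapted basis of Lemma \ref{lema1} through the canonical identification $\widetilde{N}=\Gamma/\Gamma_{q}\cong H_{1}({\mathcal X};{\mathbb Z}_{q})$, and then to check that under this identification the conjugation action of $\widetilde{\Phi}$ on $\widetilde{N}$ is exactly the reduction modulo $q$ of the action $\tau_{*}$ on $H_{1}({\mathcal X};{\mathbb Z})$ supplied by Lemma \ref{lema1}.

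First I would recall that, since $\Gamma_{q}\supseteq \Gamma'$ contains all $q$-th powers, the group $\widetilde{N}=\Gamma/\Gamma_{q}$ is the reduction modulo $q$ of the abelianization $\Gamma/\Gamma'\cong H_{1}({\mathcal X};{\mathbb Z})$. Concretely, reduction modulo $q$ gives a surjective homomorphism $H_{1}({\mathcal X};{\mathbb Z}) \twoheadrightarrow \widetilde{N}$ which carries a ${\mathbb Z}$-basis to a generating set of $\widetilde{N}\cong{\mathbb Z}_{q}^{2g}$. I then define $n_{j,i}$ to be the image of the basis element $a_{j,i}$ of Lemma \ref{lema1} under this map. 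Because the number of such elements is $(r-2)(p-1)=2g=\dim_{{\mathbb Z}_{q}}\widetilde{N}$, the family $\{n_{j,i}\}$ is indeed a generating set of $\widetilde{N}$.

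The heart of the argument is to identify the conjugation action $n\mapsto \widetilde{\Phi}\,n\,\widetilde{\Phi}^{-1}$ on $\widetilde{N}$ with $\tau_{*}$ reduced modulo $q$. This is the standard fact from the theory of regular coverings: since $\pi_{\widetilde{N}}:\widetilde{{\mathcal X}}\to{\mathcal X}$ has deck group $\widetilde{N}=\Gamma/\Gamma_{q}$ and $\widetilde{\Phi}$ lifts $\tau$, conjugation by $\widetilde{\Phi}$ on the deck group coincides with the automorphism induced by $\tau_{*}$ on $\Gamma/\Gamma_{q}$, which is well defined because $\Gamma_{q}$ is characteristic and which agrees with $\tau_{*}$ on $H_{1}({\mathcal X};{\mathbb Z}_{q})$. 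I would also record that this action does not depend on the particular lift chosen: any two lifts of $\tau$ differ by an element of the abelian group $\widetilde{N}$, and conjugation by an element of $\widetilde{N}$ acts trivially on $\widetilde{N}$; hence the order-$p$ lift $\widetilde{\Phi}$ produced in Lemma \ref{lema2} induces exactly the action $\tau_{*}$, as was already asserted in the paragraph preceding the statement.

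With these identifications in place, the lemma follows by applying the equivariant, surjective reduction homomorphism to the relations of Lemma \ref{lema1}: the equalities $\tau_{*}(a_{j,i})=a_{j,i+1}$ and $a_{j,p}=(a_{j,1}a_{j,2}\cdots a_{j,p-1})^{-1}$ pass to $\widetilde{\Phi}(n_{j,i})=n_{j,i+1}$ and $n_{j,p}=(n_{j,1}n_{j,2}\cdots n_{j,p-1})^{-1}$ for every $j=1,\ldots,r-2$. The only genuine obstacle is the verification that lift-conjugation coincides with the induced homological action; once that covering-theoretic fact is granted, the remainder is the purely formal transport of the relations through a surjective $\widetilde{\Phi}$-equivariant homomorphism.
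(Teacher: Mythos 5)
Your proposal is correct and follows exactly the route the paper takes: the paper states this lemma as an immediate consequence of Lemma \ref{lema1}, having just asserted that the conjugation action of $\widetilde{\Phi}$ on $\widetilde{N}$ is the one induced by $\tau_{*}$ on $H_{1}({\mathcal X};{\mathbb Z})$, which is precisely the identification you verify. Your write-up simply supplies the details the paper leaves implicit (the mod-$q$ reduction of Gilman's basis, the equivariance, and the independence of the choice of lift via abelianness of $\widetilde{N}$), all of which are correct.
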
	

As an immediate consequence we obtain the following result.

\begin{cor}\label{lema3}
\begin{enumerate}
\item If $ \; C $ is a cyclic subgroup of  $ \widetilde{G}$ such that  $ \widetilde{\Phi} \in C, $ then $ C = \widetilde{P}.$
In particular, $\widetilde{G}$ is a Frobenius group with kernel  $\widetilde{N}$ and complement  $ \widetilde{P}$.

\item If $x \in {\rm Fix}(\tau)$, then in the fiber $\pi_{\widetilde{N}}^{-1}(x)$ there is exactly one fixed point by $\widetilde{\Phi}$.
\vspace{2mm}\\
\end{enumerate} \end{cor}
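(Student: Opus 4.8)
The plan is to isolate the one algebraic fact that drives both statements, namely that the conjugation action of $\widetilde{\Phi}$ on $\widetilde{N}$ has no nontrivial fixed points, and then to read off (1) and (2) as formal consequences. So the first and essential step is to prove that $C_{\widetilde{N}}(\widetilde{\Phi})=\{1\}$, and more generally $C_{\widetilde{N}}(\widetilde{\Phi}^{k})=\{1\}$ for $1\le k\le p-1$. Since every nontrivial power $\widetilde{\Phi}^{k}$ generates $\widetilde{P}$, a vector fixed by $\widetilde{\Phi}^{k}$ is fixed by the whole of $\widetilde{P}$, and in particular by $\widetilde{\Phi}$; hence it suffices to treat $\widetilde{\Phi}$ itself. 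Using the generators of Lemma \ref{leman1} I would write $\widetilde{N}$ additively (it is an $\mathbb{F}_{q}$-vector space, as $q$ is prime) as the direct sum of the $\widetilde{\Phi}$-invariant blocks $M_{j}=\langle n_{j,1},\ldots,n_{j,p-1}\rangle$, so it is enough to argue in one block. Writing a putative fixed vector $v=\sum_{i=1}^{p-1}c_{i}\,n_{j,i}$, applying $\widetilde{\Phi}$, and substituting the relation $n_{j,p}=-(n_{j,1}+\cdots+n_{j,p-1})$, a comparison of coefficients gives $c_{1}=-c_{p-1}$ and $c_{i}=c_{i-1}-c_{p-1}$, whence $c_{i}=-\,i\,c_{p-1}$ and finally $p\,c_{p-1}=0$ in $\mathbb{Z}_{q}$. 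Here the hypothesis $p\neq q$ enters decisively: $p$ is invertible modulo $q$, forcing $c_{p-1}=0$ and hence $v=0$. (Conceptually, each $M_{j}$ is the augmentation module $\mathbb{F}_{q}[\mathbb{Z}_{p}]/\langle N\rangle$, which carries no trivial constituent precisely because $q\nmid p$.) This is the step I expect to be the main obstacle, in the sense that it is where the arithmetic hypothesis is genuinely used; everything else is bookkeeping.

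For part (1), let $C=\langle g\rangle$ be cyclic with $\widetilde{\Phi}\in C$. Projecting along $\widetilde{G}\to\widetilde{G}/\widetilde{N}\cong\widetilde{P}\cong\mathbb{Z}_{p}$, the image of $C$ contains the image of $\widetilde{\Phi}$, hence is all of $\mathbb{Z}_{p}$; thus $C/(C\cap\widetilde{N})\cong\mathbb{Z}_{p}$, and $C\cap\widetilde{N}$ is a cyclic subgroup of the $q$-group $\widetilde{N}$. If $C\cap\widetilde{N}\neq\{1\}$ then $|C|=pq$, and inside the abelian group $C$ the element $g^{p}$ is a nontrivial element of $\widetilde{N}$ commuting with the order-$p$ element $g^{q}$, which lies in the unique subgroup of order $p$ of $C$, namely $\langle\widetilde{\Phi}\rangle=\widetilde{P}$. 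This exhibits a nontrivial power of $\widetilde{\Phi}$ centralizing a nonzero element of $\widetilde{N}$, contradicting the first step. Hence $C\cap\widetilde{N}=\{1\}$, so $|C|=p$ and $C=\langle\widetilde{\Phi}\rangle=\widetilde{P}$. The Frobenius assertion is then immediate: the first step says exactly that every nontrivial element of the complement $\widetilde{P}$ acts fixed-point-freely on the kernel $\widetilde{N}$, which is the defining property of the Frobenius group $\widetilde{G}=\widetilde{N}\rtimes\widetilde{P}$.

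For part (2), since $\pi_{\widetilde{N}}$ is \'etale with deck group $\widetilde{N}$, the fiber $\pi_{\widetilde{N}}^{-1}(x)$ is a principal $\widetilde{N}$-set of cardinality $q^{2g}$, and the relation $\pi_{\widetilde{N}}\circ\widetilde{\Phi}=\tau\circ\pi_{\widetilde{N}}$ together with $\tau(x)=x$ shows that $\widetilde{\Phi}$ permutes this fiber. For uniqueness I would fix a point $\tilde{x}_{0}$ of the fiber and write every point as $n\cdot\tilde{x}_{0}$ with $n\in\widetilde{N}$ unique; using $\widetilde{\Phi}\circ n=({}^{\widetilde{\Phi}}n)\circ\widetilde{\Phi}$ one gets that $n\cdot\tilde{x}_{0}$ is $\widetilde{\Phi}$-fixed if and only if ${}^{\widetilde{\Phi}}n=n$, i.e. $n\in C_{\widetilde{N}}(\widetilde{\Phi})=\{1\}$ by the first step; so there is at most one fixed point. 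For existence, $\widetilde{\Phi}$ has order $p$, so its orbits on the fiber have size $1$ or $p$ and the number of fixed points is $\equiv q^{2g}\pmod{p}$; since $p\neq q$ gives $p\nmid q^{2g}$, this number is nonzero, hence at least one fixed point exists. Combining the two bounds yields exactly one fixed point, as claimed.
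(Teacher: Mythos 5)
Your proof is correct and takes essentially the route the paper intends: the paper states this corollary ``as an immediate consequence'' of Lemma \ref{leman1}, and your central step---using the adapted generators to show $C_{\widetilde{N}}(\widetilde{\Phi}^{k})=\{1\}$ for $1\le k\le p-1$, i.e.\ that $\widetilde{\Phi}$ acts fixed-point-freely on $\widetilde{N}\setminus\{1\}$---is precisely the content that makes both parts immediate. One cosmetic repair: in part (2), your equivalence ``$n\cdot\tilde{x}_{0}$ is fixed iff ${}^{\widetilde{\Phi}}n=n$'' presupposes that the base point $\tilde{x}_{0}$ is itself a fixed point, so the existence argument (your orbit count modulo $p$) should logically precede the uniqueness argument; this is a reordering, not a gap.
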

Let us recall the following well known fact.

\begin{lem} \label{leman}
For any  $k \in \{1,2,\ldots,(p-1)(r-2)-1\}$, the number of subgroups $K < \widetilde{N}$ with  $\vert K \vert  = {q}^{k}$ is given by 

$$
\displaystyle{\frac{\displaystyle\prod_{j=0}^{k-1}  \left( q^{(p-1)(r-2)-j}-1\right)}{ \displaystyle\prod_{j=0}^{k-1} \left(q^{k-j}-1 \right)}}.
$$

\end{lem}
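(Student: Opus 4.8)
The plan is to recognize the stated quantity as a Gaussian binomial coefficient and to reduce the count to the classical enumeration of subspaces of a finite-dimensional vector space. Since $q$ is prime, the abelian group $\widetilde{N} \cong \mathbb{Z}_q^{2g}$ is naturally an $\mathbb{F}_q$-vector space of dimension $n := 2g = (p-1)(r-2)$, the last equality coming from the genus formula $g=(p-1)(r-2)/2$ recorded at the start of this section. Under this identification the $\mathbb{F}_q$-linear structure coincides with the group structure, so a subgroup $K < \widetilde{N}$ with $|K| = q^k$ is precisely a $k$-dimensional $\mathbb{F}_q$-subspace of $\mathbb{F}_q^n$. Thus counting such subgroups is the same as counting $k$-dimensional subspaces of $\mathbb{F}_q^n$.

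To carry out this count I would first enumerate the ordered linearly independent $k$-tuples $(v_1,\ldots,v_k)$ in $\mathbb{F}_q^n$. The vector $v_1$ may be any nonzero vector, giving $q^n-1$ choices; having chosen $v_1,\ldots,v_j$ spanning a $j$-dimensional subspace, the vector $v_{j+1}$ may be any vector lying outside that subspace, giving $q^n-q^j$ choices. Hence the number of ordered independent $k$-tuples equals $\prod_{j=0}^{k-1}(q^n-q^j)$. The identical argument carried out inside a single fixed $k$-dimensional subspace shows that each such subspace possesses exactly $\prod_{j=0}^{k-1}(q^k-q^j)$ ordered bases. Since every $k$-dimensional subspace is produced once for each of its ordered bases, the number of $k$-dimensional subspaces is the quotient
$$
\frac{\displaystyle\prod_{j=0}^{k-1}(q^n-q^j)}{\displaystyle\prod_{j=0}^{k-1}(q^k-q^j)}.
$$

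Finally I would simplify this ratio to match the displayed formula. Factoring $q^n-q^j = q^j(q^{n-j}-1)$ and $q^k-q^j = q^j(q^{k-j}-1)$, the common factor $q^{0+1+\cdots+(k-1)}$ occurs in both numerator and denominator and cancels, leaving
$$
\frac{\displaystyle\prod_{j=0}^{k-1}\left(q^{n-j}-1\right)}{\displaystyle\prod_{j=0}^{k-1}\left(q^{k-j}-1\right)},
$$
which is exactly the claimed expression after substituting $n=(p-1)(r-2)$. I expect no genuine obstacle in this proof: the whole argument is the standard derivation of the Gaussian binomial coefficient $\binom{n}{k}_q$, and the only bookkeeping step is the cancellation of the powers of $q$ needed to put the quotient into the precise form stated. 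The restriction $k\in\{1,\ldots,(p-1)(r-2)-1\}$ merely excludes the trivial subgroup and the whole group, for which the formula also holds and evaluates to $1$.
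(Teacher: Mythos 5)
Your proof is correct and follows essentially the same route as the paper's own argument, which likewise identifies $\widetilde{N}\cong{\mathbb Z}_q^{(p-1)(r-2)}$ with an ${\mathbb F}_q$-vector space whose subgroups are exactly its subspaces, counts ordered linearly independent $k$-tuples, and divides by the number of ordered bases of a $k$-dimensional subspace. Your write-up is in fact slightly more complete, since you carry out the cancellation of the powers of $q$ explicitly, a step the paper leaves to the reader.
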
	

\begin{lem}\label{lema5}
If $K<\widetilde{N}$ is invariant under conjugation by $\widetilde{\Phi}$, then $K \cong {\mathbb Z}_{q}^{s}$ for some $s \in \{0,1,\ldots,(p-1)(r-2)\}$ such that $q^s \equiv 1 \mod p$.
In fact, the minimum positive $s$ is the multiplicative order of $q$ in ${\mathbb Z}_{p}^*$, and all other values of $s$ are its (additive) multiples contained in $\{0,1,\ldots, (p-1)(r-2)\}$. 
\end{lem}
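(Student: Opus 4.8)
The plan is to reformulate the statement in terms of modules over the group algebra $\mathbb{F}_q[\widetilde{P}]$ and then to read off the admissible dimensions from the factorization of $X^{p}-1$ over $\mathbb{F}_q$. Since $\widetilde{N}\cong\mathbb{Z}_q^{2g}$ is elementary abelian of rank $2g=(p-1)(r-2)$, it is an $\mathbb{F}_q$-vector space and every subgroup $K<\widetilde{N}$ is automatically isomorphic to $\mathbb{Z}_q^{s}$ with $s=\dim_{\mathbb{F}_q}K$; so only the possible values of $s$ are at issue. The conjugation action of $\widetilde{\Phi}$ is $\mathbb{F}_q$-linear and satisfies $\widetilde{\Phi}^{p}=1$, making $\widetilde{N}$ a module over $\mathbb{F}_q[\widetilde{P}]\cong\mathbb{F}_q[X]/(X^{p}-1)$ with $X$ acting as $\widetilde{\Phi}$, and the $\widetilde{\Phi}$-invariant subgroups are exactly the $\mathbb{F}_q[\widetilde{P}]$-submodules.

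First I would use Lemma \ref{leman1} to split $\widetilde{N}$ into the $r-2$ blocks $B_j=\langle n_{j,1},\ldots,n_{j,p-1}\rangle$, each $\widetilde{\Phi}$-invariant and of $\mathbb{F}_q$-dimension $p-1$. On a single block, the relations $\widetilde{\Phi}(n_{j,i})=n_{j,i+1}$ together with $n_{j,p}=-(n_{j,1}+\cdots+n_{j,p-1})$ (in additive notation) identify the matrix of $\widetilde{\Phi}$ with the companion matrix of the $p$-th cyclotomic polynomial $f(X)=X^{p-1}+\cdots+X+1$. Hence $B_j\cong\mathbb{F}_q[X]/(f(X))$ as an $\mathbb{F}_q[X]$-module, and
$$
\widetilde{N}\;\cong\;\bigl(\mathbb{F}_q[X]/(f(X))\bigr)^{\,r-2}.
$$

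Next I would factor $f$ over $\mathbb{F}_q$. As $p\ne q$, the polynomial $X^{p}-1$ is separable over $\mathbb{F}_q$, so $f$ is squarefree and splits into $(p-1)/d$ distinct monic irreducibles, each of degree $d$, where $d=\mathrm{ord}_p(q)$ is the multiplicative order of $q$ in $\mathbb{Z}_p^{*}$. By the Chinese Remainder Theorem $\mathbb{F}_q[X]/(f(X))\cong\prod_{i=1}^{(p-1)/d}\mathbb{F}_{q^{d}}$ is a product of fields, whence
$$
\widetilde{N}\;\cong\;\prod_{i=1}^{(p-1)/d}\bigl(\mathbb{F}_{q^{d}}\bigr)^{\,r-2}
$$
is semisimple. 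A submodule of a module over a finite product of fields is a tuple of subspaces, one over each factor, so an $\mathbb{F}_q[\widetilde{P}]$-submodule $K$ corresponds to subspaces $V_i\le(\mathbb{F}_{q^{d}})^{r-2}$ and satisfies $\dim_{\mathbb{F}_q}K=d\sum_i\dim_{\mathbb{F}_{q^{d}}}V_i$, each summand lying in $\{0,1,\ldots,r-2\}$.

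It then follows that $s=\dim_{\mathbb{F}_q}K$ is always a multiple of $d$, and that as the integers $\dim_{\mathbb{F}_{q^{d}}}V_i$ range independently over $\{0,\ldots,r-2\}$ every multiple of $d$ between $0$ and $d\cdot\frac{p-1}{d}\cdot(r-2)=(p-1)(r-2)$ occurs. Since $q^{s}\equiv 1\pmod p$ exactly when $d\mid s$, the admissible exponents $s$ are precisely the multiples of $d=\mathrm{ord}_p(q)$ in $\{0,1,\ldots,(p-1)(r-2)\}$, which is the assertion. I expect the main obstacle to be the two structural identifications---that each block is the cyclic module $\mathbb{F}_q[X]/(f(X))$, and that $f$ factors over $\mathbb{F}_q$ into irreducibles all of degree $d$; once these are in place, the classification of submodules and the resulting dimension count are routine.
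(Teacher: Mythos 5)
Your proof is correct, but it takes a genuinely different route from the paper's. The paper deduces the lemma in two lines from Corollary \ref{lema3}: since $\widetilde{G}$ is a Frobenius group, every orbit of $\widetilde{N}\setminus\{1\}$ under conjugation by $\widetilde{\Phi}$ has exactly $p$ elements, so an invariant subgroup $K$ is the disjoint union of $\{1\}$ and full orbits, giving $q^{s}=|K|=1+p\,(\textup{number of orbits})$, i.e.\ $q^{s}\equiv 1 \bmod p$; the clause about multiples of the multiplicative order is then elementary number theory. You instead argue structurally: via Lemma \ref{leman1} you identify $\widetilde{N}$ with $\bigl(\mathbb{F}_q[X]/(\Phi_p(X))\bigr)^{r-2}$ through companion matrices, factor the cyclotomic polynomial over $\mathbb{F}_q$ into distinct irreducibles of common degree $d=\mathrm{ord}_p(q)$, and classify the submodules of the resulting semisimple module over a product of fields. (The only step you gloss over, that the $(p-1)(r-2)$ generators of Lemma \ref{leman1} actually form a basis so that the blocks give a direct sum, is automatic from the cardinality of a generating set of $\mathbb{Z}_q^{(p-1)(r-2)}$.) The trade-off is clear: the paper's orbit count is far shorter, but it only establishes the congruence constraint on $s$, and the existence of invariant subgroups realizing each admissible $s$ requires a separate counting argument (Lemma \ref{lema7}). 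Your classification proves the constraint and the existence simultaneously, so it subsumes Lemma \ref{lema7} as well, and it exposes the semisimple $\mathbb{F}_q[\widetilde{P}]$-module structure of $\widetilde{N}$ that implicitly underlies the paper's later representation-theoretic sections. Both arguments are sound.
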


\begin{proof}
This is a direct consequence of Corollary \ref{lema3}, since the orbit of each element of $\widetilde{N} \setminus \{1\}$ under conjugation by  $\widetilde{\Phi}$ has cardinality $p$, and  therefore 
$$
q^{s}=|K| = 1+p \, (\textup{number of orbits}). 
$$
\end{proof}

\begin{rem} \label{rem:not}
It follows from Lemma \ref{lema5} that, for $(p,q-1)=1$, the  maximal subgroups $\widetilde{L}_{j} \in {\mathcal L}$ are not $\widetilde{\Phi}$-invariant.
\end{rem}

\begin{lem}\label{lema7}
For any  $s \in \{0,1,\ldots,(p-1)(r-2)\}$ such that $q^s \equiv 1 \mod p$ there exists  ${\mathbb Z}_{q}^{s}  \cong K<\widetilde{N}$ invariant under conjugation by $\widetilde{\Phi}$.
\end{lem}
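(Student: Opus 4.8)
The plan is to regard $\widetilde{N}\cong\mathbb{Z}_q^{2g}$ (with $2g=(p-1)(r-2)$) as a module over the group algebra $\mathbb{F}_q[\widetilde{P}]$, where $\mathbb{F}_q=\mathbb{Z}_q$ and $\widetilde{P}=\langle\widetilde{\Phi}\rangle\cong\mathbb{Z}_p$ acts by conjugation; a subgroup $K<\widetilde{N}$ is $\widetilde{\Phi}$-invariant precisely when it is an $\mathbb{F}_q[\widetilde{P}]$-submodule. Since $p\neq q$, the order of $\widetilde{P}$ is invertible in $\mathbb{F}_q$, so by Maschke's theorem this module is semisimple, and the task reduces to determining the possible $\mathbb{F}_q$-dimensions of its submodules.

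First I would use the adapted generators of Lemma \ref{leman1} to describe the module explicitly. Writing $t$ for the action of $\widetilde{\Phi}$ and using additive notation, each block $V_j:=\langle n_{j,1},\dots,n_{j,p-1}\rangle$ is cyclic, generated by $n_{j,1}$, and the relation $n_{j,p}=-(n_{j,1}+\cdots+n_{j,p-1})$ translates into $(1+t+\cdots+t^{p-1})\,n_{j,1}=0$. Hence $V_j\cong\mathbb{F}_q[t]/(\Psi_p(t))$, where $\Psi_p(t)=1+t+\cdots+t^{p-1}$ is the $p$-th cyclotomic polynomial, and $\widetilde{N}\cong\bigoplus_{j=1}^{r-2}V_j$ since the $(p-1)(r-2)$ generators $n_{j,i}$ form an $\mathbb{F}_q$-basis.

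Next I would factor $\Psi_p$ over $\mathbb{F}_q$. Because $\gcd(p,q)=1$, the polynomial $t^p-1$ is separable, so $\Psi_p=f_1\cdots f_m$ is a product of $m=(p-1)/d$ \emph{distinct} monic irreducibles, each of degree $d$, where $d$ is the multiplicative order of $q$ in $\mathbb{Z}_p^*$ (the same $d$ appearing in Lemma \ref{lema5}). By the Chinese Remainder Theorem each block splits as $V_j\cong\bigoplus_{i=1}^m S_i$, with $S_i:=\mathbb{F}_q[t]/(f_i)$ pairwise non-isomorphic simple modules of dimension $d$, whence $\widetilde{N}\cong\bigoplus_{i=1}^m S_i^{\oplus(r-2)}$. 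By semisimplicity any submodule decomposes along the isotypical components $S_i^{\oplus(r-2)}$, and a submodule of $S_i^{\oplus(r-2)}$ is isomorphic to $S_i^{\oplus k_i}$ for some $0\le k_i\le r-2$; therefore the possible dimensions of a $\widetilde{\Phi}$-invariant $K$ are exactly $\dim_{\mathbb{F}_q}K=d\sum_{i=1}^m k_i$, i.e.\ all multiples $d\ell$ with $0\le\ell\le m(r-2)=(p-1)(r-2)/d$.

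To conclude, given $s\le(p-1)(r-2)$ with $q^s\equiv1\pmod p$, the order $d$ of $q$ divides $s$, so $s=d\ell$ with $0\le\ell\le m(r-2)$; choosing nonnegative integers $k_i\le r-2$ with $\sum_{i=1}^m k_i=\ell$ and setting $K=\bigoplus_{i=1}^m S_i^{\oplus k_i}$ produces a $\widetilde{\Phi}$-invariant subgroup with $K\cong\mathbb{Z}_q^s$. I expect the main obstacle to be the bookkeeping that turns Lemma \ref{leman1} into the clean identification $V_j\cong\mathbb{F}_q[t]/(\Psi_p(t))$ and, above all, verifying that the irreducible factors of $\Psi_p$ are pairwise distinct of common degree $d$; this separability and common-degree fact is what forces the admissible dimensions to be precisely the multiples of $d$ and thereby matches the numerical condition $q^s\equiv1\pmod p$ of Lemma \ref{lema5}.
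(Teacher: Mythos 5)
Your proof is correct, and it takes a genuinely different route from the paper's. The paper argues by counting: after using whole blocks of the adapted basis (Lemma \ref{leman1}) to produce $\widetilde{\Phi}$-invariant subgroups of order $q^{(p-1)k}$ and thereby reduce to $1\leq s\leq p-2$, it lets $\widetilde{P}$ act by conjugation on the family $\mathcal{F}$ of \emph{all} subgroups $K\cong{\mathbb Z}_{q}^{s}$ of $\widetilde{N}$, whose cardinality is the Gaussian binomial of Lemma \ref{leman}; since by Corollary \ref{lema3} every orbit has length $1$ or $p$, it suffices to show $p\nmid\#\mathcal{F}$, which the paper deduces from $q^{(p-1)(r-2)}\equiv q^{s}\equiv 1 \bmod p$. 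You instead compute the $\mathbb{F}_{q}[\widetilde{P}]$-module structure of $\widetilde{N}$ outright: each Gilman block is $\mathbb{F}_{q}[t]/(\Psi_{p}(t))$, the factorization of $\Psi_{p}$ into $m=(p-1)/d$ distinct irreducibles of common degree $d$ gives $\widetilde{N}\cong\bigoplus_{i=1}^{m}S_{i}^{\oplus(r-2)}$, and semisimplicity shows that the dimensions of $\widetilde{\Phi}$-invariant subgroups are \emph{exactly} the multiples of $d$ in $\{0,\ldots,(p-1)(r-2)\}$. Your route costs more machinery (Maschke, CRT, cyclotomic factorization over finite fields) but buys more: it is constructive, it classifies all invariant subgroups rather than producing a single one, it proves Lemma \ref{lema5} and Lemma \ref{lema7} in one stroke, and it makes transparent the remark following Lemma \ref{lema7} that the minimal normal subgroups of $\widetilde{G}$ contained in $\widetilde{N}$ have order $q^{s_{0}}$ --- they are your simple summands $S_{i}$. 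It is also more robust: the paper's claim that each factor $(q^{(p-1)(r-2)-j}-1)/(q^{s-j}-1)$, written in lowest terms, has numerator and denominator prime to $p$ requires a $p$-adic valuation argument for the indices $j$ divisible by $d$ (note $j=0$ is always such an index), and by the lifting-the-exponent lemma it can genuinely fail when $p\mid r-2$: for instance $p=5$, $q\equiv 1\bmod 5$, $r=7$, $s=1$ gives $\#\mathcal{F}=(q^{20}-1)/(q-1)\equiv 20\equiv 0 \bmod 5$, so the fixed-point count is inconclusive there, while your construction of an invariant $K$ goes through for all admissible parameters.
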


\begin{proof}
Is clear that we may assume   $1 \leq s \leq (p-1)(r-2)-1$. 
According to  Lemma \ref{lema1}, for all $ 0 \leq k \leq r-2$ there exist $\widetilde{\Phi}$-invariant subgroups of $\widetilde{N}$ of order ${q}^{(p-1)k}$ . Now, if there exists $s$ such that $q^s \equiv 1 \mod p$ and $s = (p-1)k+n$ with $1 \leq n \leq p-2$, then $q^n \equiv 1 \mod p$, and hence it is enough to  verify our assertion for $1 \leq s \leq p-2$.  
So fix $s$ such that $1 \leq s \leq p-2$ and  $q^s \equiv 1 \mod p$.  If  $\mathcal{F}$ is the collection  of all subgroups $K \cong {\mathbb Z}_{q}^{s}$ of $\widetilde{N}$, then (by Lemma \ref{leman}) its cardinality is 

$$
\# {\mathcal F}:=\prod_{j=0}^{s-1}  \frac{q^{(p-1)(r-2)-j}-1}{q^{s-j}-1}.
$$

The subgroup $\widetilde{P}  = \langle \widetilde{\Phi} \rangle $ acts on $\mathcal{F}$ by conjugation, and it follows from  Lemma \ref{lema3} that for each $K$ in $\mathcal{F}$ either $K$ is invariant under conjugation   or the orbit of $K$ under the action has size $p$. Denote by $\textup{Fix}(\widetilde{\Phi})$ the set of $K$ in  $\mathcal{F}$ that are invariant under conjugation by $\widetilde{\Phi}$; we have to show that $\textup{Fix}(\widetilde{\Phi})$ is non-empty.  As $q^{(p-1)(r-2)}$ and (by hypothesis) $q^{s}$ are both congruent to $1$ module $p$, it follows that 
$\frac{q^{(p-1)(r-2)-j}-1}{q^{s-j}-1}=a_{j}/b_{j}$, where $a_{j}$ and $b_{j}$ are relatively prime integers, each one also not divisible by $p$. Now, since 
$\# {\mathcal F} = \vert \textup{Fix}(\widetilde{\Phi}) \vert + np$,
for some nonnegative integer $n$, and the left hand side of this equality is not divisible by $p$, we are done.   
\end{proof}

Observe that if $s_0$ is the multiplicative order of $q$ in $\mathbb{Z}_p^*$, then  there is $K <\widetilde{N}$ with $K \cong {\mathbb Z}_{q}^{s_0}$ and $K$ normal in $\widetilde{G}$,
according to Lemma \ref{lema7}.  Furthermore, $K$ is minimal normal in $\widetilde{G}$, since the action of conjugation by $\widetilde{\Phi}$ is irreducible on $K$.

\begin{lem} \label{lema8}
For every  $\widetilde{L} \in {\mathcal L}$ , the cardinality of its core $\widetilde{L}_{\widetilde{G}}$  in $\widetilde{G}$ is at least
$q^{(p-1)(r-3)}$.	
\end{lem}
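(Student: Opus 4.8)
The plan is to turn the computation of the core into a statement about hyperplanes in an $\mathbb F_q$-vector space. First I would observe that, since $\widetilde N$ is abelian and $\widetilde L\le \widetilde N$, conjugation by any element of $\widetilde N$ fixes $\widetilde L$; moreover $\widetilde N\unlhd \widetilde G$ forces every $\widetilde G$-conjugate of $\widetilde L$ to lie in $\widetilde N$. Hence only the powers of $\widetilde\Phi$ contribute, and
\[
\widetilde L_{\widetilde G}=\bigcap_{i=0}^{p-1}\widetilde\Phi^{\,i}\,\widetilde L\,\widetilde\Phi^{-i}.
\]
I would then identify $\widetilde N\cong\mathbb Z_q^{\,d}$, $d=(p-1)(r-2)=2g$, with an $\mathbb F_q$-vector space $V$ on which $\widetilde\Phi$ acts as the linear operator $T$ from Lemma \ref{leman1}, and write the maximal subgroup as a hyperplane $\widetilde L=\ker\phi$ for a nonzero functional $\phi\in V^{*}$. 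Since $\widetilde\Phi^{\,i}\widetilde L\widetilde\Phi^{-i}=\ker(\phi\circ T^{-i})$, the core is the common kernel of the $p$ functionals $\phi_i:=\phi\circ T^{-i}$, and its codimension in $V$ equals the dimension of the span of $\{\phi_0,\dots,\phi_{p-1}\}$ inside $V^{*}$.

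The key step is to bound this span by $p-1$. For this I would read off from the adapted basis of Lemma \ref{leman1} that $V$ splits as a direct sum of $(p-1)$-dimensional $T$-invariant blocks, on each of which $T$ is the companion matrix of $\Phi_p(x)=1+x+\cdots+x^{p-1}$ (the relation $n_{j,p}=(n_{j,1}\cdots n_{j,p-1})^{-1}$ is exactly what gives the last column). Consequently $I+T+\cdots+T^{p-1}=\Phi_p(T)=0$ on all of $V$, and dualizing yields
\[
\sum_{i=0}^{p-1}\phi_i=\phi\circ\big(I+T^{-1}+\cdots+T^{-(p-1)}\big)=\phi\circ\Phi_p(T)=0 .
\]
Thus the $p$ functionals $\phi_0,\dots,\phi_{p-1}$ satisfy a nontrivial linear relation, so they span a subspace of dimension at most $p-1$.

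Combining the two steps, the codimension of $\widetilde L_{\widetilde G}$ in $\widetilde N$ is at most $p-1$, whence
\[
\bigl|\widetilde L_{\widetilde G}\bigr|\ \ge\ q^{\,d-(p-1)}=q^{(p-1)(r-2)-(p-1)}=q^{(p-1)(r-3)},
\]
which is the asserted bound. I expect the genuine obstacle to be this sharp count rather than the bookkeeping: $p$ hyperplanes in general position would only force codimension $\le p$, giving a bound weaker by one factor of $q$, and it is precisely the single relation $\Phi_p(T)=0$ supplied by the adapted basis that upgrades this to codimension $\le p-1$. As a consistency check, $\widetilde L_{\widetilde G}$ is normal in $\widetilde G$ and contained in $\widetilde N$, so by Lemma \ref{lema5} its order is $q^{s}$ with $q^{s}\equiv 1\pmod p$; this is compatible with the value of $s$ found above.
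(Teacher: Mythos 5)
Your proof is correct, and it takes a genuinely different route from the paper's. The paper assembles the $p$ conjugates $\widetilde{L}_1,\ldots,\widetilde{L}_p$ of $\widetilde{L}$ into a single homomorphism $f:\widetilde{N}\to \widetilde{N}/\widetilde{L}_1\times\cdots\times\widetilde{N}/\widetilde{L}_p\cong\mathbb{Z}_q^p$, observes that $\ker(f)=\widetilde{L}_{\widetilde{G}}$, and thus gets $|\widetilde{L}_{\widetilde{G}}|\geq q^{(p-1)(r-2)-p}$, i.e.\ codimension at most $p$; the jump to the stated exponent $(p-1)(r-3)=(p-1)(r-2)-(p-1)$ is then dispatched with a one-line divisibility remark. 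That last step is the delicate point: knowing the order is a power of $q$ does not by itself exclude codimension exactly $p$, and one needs extra input, e.g.\ Lemma \ref{lema5} (the core is $\widetilde{\Phi}$-invariant, so its codimension $k\le p$ satisfies $q^{k}\equiv 1\pmod{p}$, which forbids $k=p$ unless $q\equiv 1\pmod{p}$). Your argument supplies exactly this missing factor of $q$, and does so unconditionally: dualizing and using the relation $I+T+\cdots+T^{p-1}=0$ forced by the adapted basis of Lemma \ref{leman1}, you exhibit the explicit linear relation $\sum_{i=0}^{p-1}\phi_i=0$ among the $p$ defining functionals, so they span at most a $(p-1)$-dimensional subspace of the dual and the core has codimension at most $p-1$ outright. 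Equivalently, you are showing that the image of the paper's map $f$ lands inside the kernel of the summation map $\mathbb{Z}_q^p\to\mathbb{Z}_q$, which sharpens the paper's count from $q^{p}$ to $q^{p-1}$. What each approach buys: the paper's is shorter and purely group-theoretic, but its sharp form implicitly leans on the congruence coming from Lemma \ref{lema5} (harmless in the paper's application, where $(p,q-1)=1$ is assumed); yours isolates the structural reason for the bound --- the cyclotomic relation supplied by the Frobenius action --- and proves the lemma with no hypothesis on $q$ modulo $p$.
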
 

\begin{proof} 
Up to index permutation, we may assume that $\{\widetilde{L_1} := \widetilde{L} , \widetilde{L_2} , \ldots , \widetilde{L_p} \}$ denote the conjugate subgroups of $\widetilde{L}$ in $\widetilde{G}$, and  consider the homomorphism 
$$
f : \widetilde{N} \longrightarrow \widetilde{N}/\widetilde{L_1} \times \widetilde{N}/\widetilde{L_2} \times \ldots \times \widetilde{N}/\widetilde{L_p}\cong \mathbb{Z}_q^p
$$
given by $f(n)= (n\widetilde{L_1} , n\widetilde{L_2} , \ldots , n\widetilde{L_p})$.	
Then $\displaystyle{ \ker(f) = \bigcap_{j=1}^p \widetilde{L_j} =\widetilde{L}_{\widetilde{G}}}$ and hence $|\widetilde{N}|/|\widetilde{L}_{\widetilde{G}}| \leq q^p$, or, equivalently, $|\widetilde{L}_G| \geq q^{(p-1)(r-2)-p}$.
Since $|\widetilde{L}_{\widetilde{G}}|$ is a divisor of $|\widetilde{N}| =  q^{(p-1)(r-2)}$, the result follows.
\end{proof}	

\begin{thm}\label{galoiscover}
Let $ p $ be an odd prime integer and $ \varphi :  {\mathcal X} \rightarrow  \mathbb{P}^{1}$ a cyclic $p$-gonal cover. Let 
${\psi} : {\mathcal Y} \rightarrow  {\mathcal X}$ be a $q$-cyclic \'etale cover, where $q \neq p$ is a prime  integer such that  $(p,q-1)=1$.
Then 
\begin{enumerate}
\item the composition  $\varphi \circ \psi:{\mathcal Y} \to {\mathcal X} \to \mathbb{P}^1 $ is non-Galois.

\item  If $Z \to \mathbb{P}^1$ denotes the Galois closure of the composition $ \varphi \circ \psi,$ then its Galois group is
$$
G = N \rtimes P \cong (\mathbb{Z}_q)^s \rtimes\mathbb{Z}_p , 
$$
for some $s \in \{ 1, 2, \ldots , p-1\}$ such that $q^s \equiv 1 \mod p$. 
\end{enumerate}
\end{thm}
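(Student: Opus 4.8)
The plan is to realize everything inside $\widetilde{G} = \widetilde{N} \rtimes \widetilde{P}$ acting on the $q$-homology cover $\widetilde{{\mathcal X}}$, and to read off the Galois closure as a quotient of $\widetilde{G}$. By Corollary \ref{cor:cyclic}, the given $q$-cyclic \'etale cover $\psi:{\mathcal Y}\to{\mathcal X}$ corresponds to a maximal subgroup $\widetilde{L}\in{\mathcal L}$ with ${\mathcal Y}=\widetilde{{\mathcal X}}/\widetilde{L}$. The composite $\varphi\circ\psi:{\mathcal Y}\to\mathbb{P}^1$ factors through $\widetilde{{\mathcal X}}/\langle\widetilde{\Phi}\rangle$-type data: more precisely, the Galois closure $Z\to\mathbb{P}^1$ should be $\widetilde{{\mathcal X}}/\widetilde{L}_{\widetilde{G}}$, where $\widetilde{L}_{\widetilde{G}}$ is the core of $\widetilde{L}$ in $\widetilde{G}$. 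The key structural fact is that $\varphi$ is itself the quotient map $\widetilde{{\mathcal X}}/\widetilde{G}\cong{\mathcal X}/\langle\tau\rangle=\mathbb{P}^1$ seen from the appropriate level, so that the monodromy of $\varphi\circ\psi$ is governed by the action of $\widetilde{G}$ on the coset space $\widetilde{G}/\widetilde{L}$ (equivalently on the fibers of ${\mathcal Y}\to\mathbb{P}^1$).

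First I would verify part (1): non-Galois-ness. A degree-$pq$ cover is Galois iff its monodromy group has order $pq$, equivalently iff $\widetilde{L}$ is normal in $\widetilde{G}$. But by Remark \ref{rem:not}, under the hypothesis $(p,q-1)=1$ the maximal subgroup $\widetilde{L}$ is not $\widetilde{\Phi}$-invariant, hence not normal in $\widetilde{G}$; the composite therefore cannot be Galois. (One must also confirm $\varphi\circ\psi$ does have degree $pq$ and that $\psi$ is connected, i.e. $\widetilde{L}$ is a genuine maximal subgroup giving a $\mathbb{Z}_q$ quotient, which is exactly Corollary \ref{cor:cyclic}.)

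For part (2), the Galois group $G$ is by definition the monodromy group of $\varphi\circ\psi$, which equals the image of $\widetilde{G}$ acting on $\widetilde{G}/\widetilde{L}$, i.e. $\widetilde{G}/\widetilde{L}_{\widetilde{G}}$ where $\widetilde{L}_{\widetilde{G}}=\bigcap_{g\in\widetilde{G}} g\widetilde{L}g^{-1}$ is the core. Writing $N:=\widetilde{N}/\widetilde{L}_{\widetilde{G}}$ and $P:=\widetilde{P}\widetilde{L}_{\widetilde{G}}/\widetilde{L}_{\widetilde{G}}\cong\mathbb{Z}_p$, the exact sequence $1\to\widetilde{N}\to\widetilde{G}\to\widetilde{P}\to 1$ descends to $G=N\rtimes P$, with $P\cong\mathbb{Z}_p$ since $\widetilde{P}\cap\widetilde{N}=1$ and $p\ne q$ forces $\widetilde{P}$ to inject. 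It remains to identify $N$. Since $\widetilde{L}_{\widetilde{G}}$ is $\widetilde{\Phi}$-invariant, Lemma \ref{lema5} applies to it, so $N=\widetilde{N}/\widetilde{L}_{\widetilde{G}}\cong\mathbb{Z}_q^{\,s}$ for some $s$ with $q^s\equiv 1\pmod p$; the range $1\le s\le p-1$ comes from combining the lower bound $|\widetilde{L}_{\widetilde{G}}|\ge q^{(p-1)(r-3)}$ of Lemma \ref{lema8} (giving $|N|\le q^{p-1}$, so $s\le p-1$) with $s\ge 1$, which holds because $\widetilde{L}$ is not normal so the core is proper in $\widetilde{N}$. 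The multiplicativity $q^s\equiv 1\pmod p$ is precisely Lemma \ref{lema5} applied to the $\widetilde{\Phi}$-invariant quotient.

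The main obstacle I anticipate is the clean identification $G\cong\widetilde{G}/\widetilde{L}_{\widetilde{G}}$ together with pinning down that $s\le p-1$ rather than merely $s\le (p-1)(r-2)$. The bound $s\le p-1$ is the real content and relies essentially on Lemma \ref{lema8}: one must argue that the quotient $N$ is built only from the $p$ conjugates of $\widetilde{L}$ via the injection $\widetilde{N}/\widetilde{L}_{\widetilde{G}}\hookrightarrow\prod_{j=1}^p\widetilde{N}/\widetilde{L_j}\cong\mathbb{Z}_q^p$, and then use the $\widetilde{\Phi}$-action to cut this down to a $\widetilde{\Phi}$-invariant subspace, forcing the dimension $s$ to satisfy $q^s\equiv 1\pmod p$ and $s\le p-1$. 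Making the last step rigorous — reconciling the a priori bound $q^p$ with the sharper $q^{p-1}$ and ensuring the Frobenius structure of $\widetilde{G}$ (Corollary \ref{lema3}(1)) transfers to $G$ — is where care is needed; everything else is bookkeeping with the correspondence between intermediate covers and subgroups.
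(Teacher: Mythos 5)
Your proposal is correct and takes essentially the same route as the paper's proof: realize ${\mathcal Y}=\widetilde{{\mathcal X}}/\widetilde{L}$ via Corollary \ref{cor:cyclic}, get part (1) from Remark \ref{rem:not} (non-normality of $\widetilde{L}$ in $\widetilde{G}$), and identify the Galois closure as $\widetilde{{\mathcal X}}/\widetilde{L}_{\widetilde{G}}$ with group $\widetilde{G}/\widetilde{L}_{\widetilde{G}}=N\rtimes P\cong \mathbb{Z}_q^s\rtimes\mathbb{Z}_p$, using Lemma \ref{lema8} for the bound on $s$ and Lemma \ref{lema5} (applied to the $\widetilde{\Phi}$-invariant core, plus Fermat) for $q^s\equiv 1 \mod p$. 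The final ``obstacle'' you anticipate is not a real one: Lemma \ref{lema8} as stated already gives $|N|\leq q^{p-1}$, and in any case the improvement from $s\leq p$ to $s\leq p-1$ follows exactly as you indicate, since $q^p\equiv q\not\equiv 1 \mod p$ when $(p,q-1)=1$.
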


\begin{proof}
It follows from Theorem \ref{teo:cubriente} that the $q$-homology cover $\widetilde{{\mathcal X}}$ of ${\mathcal X}$ is a cover of ${\mathcal Y}$, and that there exists a  maximal subgroup $\widetilde{L}\in {\mathcal L}$ such that ${\mathcal Y} = \widetilde{{\mathcal X}}/\widetilde{L}$, ${\mathcal X}= \widetilde{{\mathcal X}}/\widetilde{N}$, and such that the cover $\varphi: {\mathcal Y} \to {\mathcal X}$ is given by the induced action of $\widetilde{N}/\widetilde{L}$ on ${\mathcal Y}$.
As $(p,q-1)=1$, it follows from Lemma \ref{lema5} that $\widetilde{L}$ is not $\widetilde{\Phi}$-invariant. Therefore the composition $\varphi \circ \psi : {\mathcal Y} \to {\mathcal X} \to \mathbb{P}^{1}$ is non-Galois.
Now consider the normal subgroup $K = \widetilde{L}_{\widetilde{G}}$ of $\widetilde{G}$ and set $G := \widetilde{G}/K := N \rtimes P \cong \mathbb{Z}_q^s \rtimes \mathbb{Z}_p $.
According to Lemma \ref{lema8} we have that  $s \in \{1, \ldots , p-1\}$ and  $q^s \equiv 1 \mod p$. Also, $L:= \widetilde{L}/K$ is a  maximal  subgroup of $N$ with $L_G = \{ 1\}$.
If ${\mathcal Z} :=  \widetilde{{\mathcal X}}/K$, then $G$ acts on ${\mathcal Z}$. Furthermore,   ${\mathcal Y} = {\mathcal Z}/L$, $\; {\mathcal X}={\mathcal Z}/N$, and the cover $ \varphi : {\mathcal Y} \to {\mathcal X}$  is given by the induced action of $N/L$ on ${\mathcal Y}$.
That is, we have the following commutative diagram
$$
\xymatrix{
& \widetilde{{\mathcal X}} \ar[d]_{} & \\	
& {\mathcal Z}= \widetilde{{\mathcal X}}/K  \ar[ddd]^{pq^{s}:1} \ar[dl]_{q^{s-1}:1} \ar[dr]^{p:1}  &\\
{\mathcal Y}={\mathcal Z}/L  \ar[d]_{q:1}  & & {\mathcal Z}/P \ar[ddl]^{q^s:1}\\
{\mathcal X}={\mathcal Z}/N \ar[dr]_{p:1} & &  \\
& \mathbb{P}^1 = {\mathcal Z}/G
}
$$ 
\end{proof}

\begin{rem}
Observe that $s$ depends on ${\mathcal Y}$, or, equivalently, on $\widetilde{L}$, and not only on $p$ and $q$. We illustrate with the next example.	
\end{rem}

\begin{exam}
If $q=3$, $p=13$ and $r=3$, then ${\mathcal X}$ has genus $6$, ${\mathcal Y}$ has genus $16$, $\widetilde{{\mathcal X}}$ has genus $2,657,206$ and $\widetilde{G} = \widetilde{N} \rtimes \widetilde{P}$, where 
$\widetilde{N}= \langle a_1 , a_2 , \ldots , a_{12}\rangle \cong \mathbb{Z}_3^{12}$ and $\widetilde{P} \cong \langle \widetilde{\Phi} \rangle = \mathbb{Z}_{13}$. The conjugation action of $\widetilde{\Phi} $ on $\widetilde{N}$ is given by
$\widetilde{\Phi}  a_{j} \widetilde{\Phi} ^{-1}=a_{j+1}$, for $j=1,\ldots,12$ and $\widetilde{\Phi}  a_{13} \widetilde{\Phi} ^{-1}=a_{1}$, where $a_{1}\cdots a_{12} a_{13}=1$. A  generating vector for the given action of $\widetilde{G}$ on $\widetilde{{\mathcal X}}$ is $(\widetilde{\Phi}  , \widetilde{\Phi}  a_1 , a_1\widetilde{\Phi} ^{11})$.
In this case the multiplicative order of $q$  in $ \mathbb{Z}_{13}^{\ast} $is $3$, and there are   maximal  subgroups of $\widetilde{N}$ with core $K$ of sizes $1$, $q^3$, $q^6$ and $q^{9}$. Examples for these  maximal  subgroups $\widetilde{L}_{j}$ with core $K_{j}$ are the following (computations were carried out in GAP \cite{GAP}):
$$\widetilde{L}_{1}=\langle  a_{1}, a_{2}, a_{3}, a_{4}, a_{5}, a_{6}, a_{7}, a_{8}, a_{9}, a_{10}, a_{11} \rangle, \; K_{1}=\{I\}.$$
$$\widetilde{L}_{2}=\langle a_{1}, a_{2}, a_{3}, a_{4}, a_{5}, a_{6}, a_{7}, a_{8}, a_{9}a_{12}^2, a_{10}, a_{11} \rangle,$$ 
$$K_{2}=\langle  a_{1}^{2}a_{5}a_{6}^{2}a_{7}a_{8}^{2}a_{10}a_{11}, a_{2}^{2}a_{6}a_{7}^{2}a_{8}a_{9}^{2}a_{11}a_{12}, a_{3}^{2}a_{4}a_{5}^{2}a_{6}a_{8}^{2}a_{9}^{2}a_{12}  \rangle  \cong {\mathbb Z}_{3}^{3}.$$
$$\widetilde{L}_{3}=\langle a_{1}, a_{2}, a_{3}, a_{4}, a_{5}, a_{6}a_{11}, a_{7}, a_{8}a_{11},a_{9}a_{11},  a_{10}a_{11}^{2}, a_{12}  \rangle,$$
$$K_{3}=\langle a_{1}a_{8}a_{10}^{2}a_{11}^{2}a_{12}^{2}, a_{2}^{2}a_{3}a_{8}a_{9}^{2}a_{12}^{2}, a_{3}^{2}a_{4}^{2}a_{8}a_{9}^{2}a_{10}a_{11}^{2}a_{12}, a_{4}a_{5}a_{6}a_{7}a_{8}^{2}a_{10}^{2}a_{11}a_{12}^{2}, $$
 $$a_{5}^{2}a_{6}^{2}a_{7}a_{8}^{2}a_{9}^{2}a_{10}a_{11}^{2}a_{12}^{2}, a_{6}^{2}a_{8}a_{9}a_{10}a_{12}^{2}  \rangle \cong {\mathbb Z}_{3}^{6}.$$
$$\widetilde{L}_{4}=\langle  a_{1}, a_{2}, a_{3}a_{12}, a_{4}, a_{5}a_{12}, a_{6}a_{12},a_{7}a_{12}, a_{8}a_{12}^{2}, a_{9}a_{12}^{2}, a_{10}, a_{11}a_{12} \rangle,$$
$$K_{4}=\langle  
a_{1}^{2}a_{2}a_{3}a_{4}a_{6}a_{7}a_{8}a_{9}a_{10}a_{11}a_{12}^{2}, 
 a_{2}^{2}a_{3}a_{4}^{2}a_{5}^{2}a_{6}^{2}a_{7}^{2}a_{8}^{2}a_{9}^{2}a_{10}^{2}a_{11}^{2}a_{12}^{2}, 
 a_{3}a_{4}^{2}a_{12}, $$
 $$
 a_{4}a_{5}a_{7}^{2}, 
 a_{5}a_{6}^{2}a_{7}^{2}a_{12}^{2}, 
 a_{6}^{2}a_{7}a_{8}^{2}a_{12}, 
 a_{7}^{2}a_{8}^{2}a_{9}^{2}a_{12}, 
 a_{8}a_{9}^{2}a_{10}a_{11}^{2}a_{12}^{2}, 
 a_{9}^{2}a_{10}^{2}a_{12} 
  \rangle \cong {\mathbb Z}_{3}^{9}.$$

The corresponding Galois covers for the composite covers ${\mathcal Y}_{j} \to {\mathcal X} \to \mathbb{P}^1$ have respective deck groups $G_{1} \cong \mathbb{Z}_3^{12} \rtimes \mathbb{Z}_{13}$, $G_{2}\cong \mathbb{Z}_3^{9} \rtimes \mathbb{Z}_{13}$, $G_{3} \cong \mathbb{Z}_3^{6} \rtimes \mathbb{Z}_{13}$ and $G_{4} \cong \mathbb{Z}_3^{3} \rtimes \mathbb{Z}_{13}$, acting on ${\mathcal Z}_{j}=\widetilde{{\mathcal X}}/K_{j}$ of corresponding genus $2,657,206$; $98,416$; $3,646$ and $136$. 
\end{exam}

\section{The representations of $\widetilde{G}$}
As we know from Lemma \ref{lema3}, $\widetilde{G} = \widetilde{N} \rtimes \widetilde{P}$ is a Frobenius group with abelian kernel. Then, according to  \cite[Proposition 25]{s}, its complex irreducible representations are of two kinds: the first kind are those complex irreducible representations of $\widetilde{P}$, extended to $\widetilde{G}$ by composing with the quotient map $\widetilde{G} \to \widetilde{P}$; there are $p$ of these, all of degree one: we will denote them by $\chi_0$ for the trivial one, and $\chi_1 , \ldots , \chi_{p-1}$ for the non-trivial; for $j$ in $\{ 1, \ldots , p-1\}$, each $\chi_j$ has  kernel $\widetilde{N}$. 

The other kind are the representations of $G$ induced from the non-trivial representations of $\widetilde{N}$; recall from the proof of Theorem \ref{producto1} that there are $q^{(p-1)(r-2)}-1$ non-trivial complex irreducible representations of $\widetilde{N}$, all of degree one, and they are grouped into orbits $\{ V^{\sigma} : \sigma \in {\rm Gal}(\mathbb{Q}[w]/\mathbb{Q})\}$ of $q-1$ elements to give the rational irreducible representations of $\widetilde{N}$. There are natural bijections between the set of orbits under the Galois group action, the set ${\mathcal L}$ of  maximal  subgroups on $\widetilde{N}$, and the set of rational irreducible representations of $\widetilde{N}$. Recall that $m_{q,g} = \dfrac{q^{2g}-1}{q-1}$ is the cardinality of each of these sets.  

Each non-trivial complex irreducible  representation $V$ of $\widetilde{N}$ induces a complex  irreducible representation $\widetilde{V}$ of $\widetilde{G}$, of degree $[\widetilde{G}:\widetilde{N}]=p$. It is a general fact that if $\ker V = \widetilde{L}$, then $\ker \widetilde{V} = \widetilde{L}_{\widetilde{G}}$.

The subgroup $\widetilde{P}$ also acts on the set of non trivial complex irreducible representations of $\widetilde{N}$, by sending $V$ to $\mbox{}^{ \widetilde{\Phi}^j} V$, defined by its character $\chi_{\mbox{}^{ \widetilde{\Phi}^j} V}(n) = \chi_V( \widetilde{\Phi}^{-j} n  \widetilde{\Phi}^j)$ for each $n$ in $\widetilde{N}$ and $0 \leq j \leq p-1$.

It is clear that $\ker ( \mbox{}^{ \widetilde{\Phi}^j} V )= \widetilde{\Phi}^{j} (\ker V) \widetilde{\Phi}^{-j}$, $ \; \widetilde{  \mbox{}^{\widetilde{\Phi}^j} V } = \widetilde{V}$, and $( \mbox{}^{\widetilde{\Phi}^j} V)^{\sigma} =  \mbox{}^{\widetilde{\Phi}^j} (V^{\sigma})$.  

Equivalently, consider the action of conjugation by powers of $\widetilde{\Phi}$ on $\mathcal{L}$. As $(p,q-1)=1$, all its orbits are of length $p$. Up to permutation of the indices, we may assume that the following is a set of representatives of these orbits 
$${\mathcal F}:=\left\{ \widetilde{L}_{j} : 1 \leq j \leq t:=\dfrac{m_{q,g}}{p} = \dfrac{q^{(p-1)(r-2)}-1}{p(q-1)}\right\} \subset {\mathcal L}.$$ 

For each $\widetilde{L}_{j} \in {\mathcal F}$, choose a complex irreducible representation $V_j$ of $\widetilde{N}$ such that $\widetilde{L}_{j} = \ker(V_j)$.

With the above notation we have the following result.

\begin{prop}
The complex irreducible representations of $\widetilde{G}$ are given by $\chi_0, \ldots, \chi_{p-1}$ of degree one, and $\widetilde{V_j}$, $1\leq j \leq \dfrac{q^{(p-1)(r-2)}-1}{p}$, of degree $p$.
The rational irreducible representations  are given by $\chi_0$, $U = \chi_1 \oplus \ldots \oplus \chi_{p-1}$ of degree $p-1$, and $\displaystyle U_j = \bigoplus_{\sigma \in {\rm Gal}(\mathbb{Q}[w_q]/\mathbb{Q})}   (\widetilde{V_j})^{\sigma}$ of degree $p(q-1)$, 	for $1 \leq j \leq t$.
\end{prop}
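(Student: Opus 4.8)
The plan is to read off the complex irreducibles from the structure of $\widetilde G=\widetilde N\rtimes\widetilde P$ as a Frobenius group with abelian kernel, and then to group them into Galois orbits to obtain the rational irreducibles, checking along the way that no Schur index intervenes.

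For the complex list I would apply the little-groups method for a semidirect product with abelian normal subgroup (\cite[Proposition 25]{s}). Here $\widetilde P$ acts on the dual group $\widehat{\widetilde N}$ of complex linear characters of $\widetilde N\cong\mathbb Z_q^{(p-1)(r-2)}$. The trivial character is the unique fixed point: a fixed nontrivial character would have $\widetilde\Phi$-invariant kernel, a maximal subgroup of $\widetilde N$, which is excluded by Remark \ref{rem:not} since $(p,q-1)=1$. Hence the nontrivial characters split into orbits of length $p$ (the order of $\widetilde P$ being prime), so there are $(q^{(p-1)(r-2)}-1)/p$ of them, each with trivial stabilizer in $\widetilde P$. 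The trivial orbit, whose stabilizer is all of $\widetilde P$, contributes the $p$ degree-one characters $\chi_0,\dots,\chi_{p-1}$ inflated from the irreducibles of $\widetilde P\cong\mathbb Z_p$; each nontrivial orbit contributes the induced representation $\widetilde{V_j}=\mathrm{Ind}_{\widetilde N}^{\widetilde G}V_j$, irreducible of degree $[\widetilde G:\widetilde N]=p$. I would confirm this is the full list by the degree count $p\cdot 1^2+\tfrac{q^{(p-1)(r-2)}-1}{p}\cdot p^2=p\,q^{(p-1)(r-2)}=|\widetilde G|$.

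For the rational list I would partition the complex irreducibles into Galois orbits. The characters $\chi_1,\dots,\chi_{p-1}$ take values in $\mathbb Q[\zeta_p]$, $\zeta_p$ a primitive $p$-th root of unity, and form a single orbit of length $p-1$ under $\mathrm{Gal}(\mathbb Q[\zeta_p]/\mathbb Q)\cong\mathbb Z_p^\ast$, which acts transitively on the nontrivial characters of $\mathbb Z_p$; their Galois sum is $U$, of degree $p-1$. For the degree-$p$ characters I would use that Galois action commutes with induction, so $(\widetilde{V_j})^\sigma=\mathrm{Ind}_{\widetilde N}^{\widetilde G}(V_j^\sigma)$ for $\sigma\in\mathrm{Gal}(\mathbb Q[w_q]/\mathbb Q)$. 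Since all Galois conjugates of $V_j$ share the kernel $\widetilde L_j$, while $\mathrm{Ind}\,V\cong\mathrm{Ind}\,V'$ for characters of $\widetilde N$ forces $V'$ into the $\widetilde P$-orbit of $V$ — whose members have the $p$ pairwise distinct conjugate kernels $\widetilde\Phi^k\widetilde L_j\widetilde\Phi^{-k}$, distinct by Remark \ref{rem:not} — the only conjugate of kernel $\widetilde L_j$ is $V_j$ itself; as the $q-1$ characters $V_j^\sigma$ are pairwise distinct, the orbit of $\widetilde{V_j}$ has full length $q-1$, and its Galois sum is $U_j$ of degree $p(q-1)$.

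Finally I would rule out Schur-index multiplicities by exhibiting a field of definition equal to the character field. The $\chi_j$ are one-dimensional, hence realizable over $\mathbb Q[\zeta_p]$ with Schur index $1$; each $V_j$ is valued in $\mu_q\subset\mathbb Q[w_q]$, so is realizable over $\mathbb Q[w_q]$, and induction preserves the field of definition, whence $\widetilde{V_j}$ is realizable over $\mathbb Q[w_q]$, which by the orbit computation equals its character field $\mathbb Q(\chi_{\widetilde{V_j}})$. Consequently every Galois orbit contributes a single rational irreducible equal to the bare orbit sum, giving exactly $\chi_0$, $U$, and $U_1,\dots,U_t$ with $t=\tfrac{q^{(p-1)(r-2)}-1}{p(q-1)}$ of the asserted degrees. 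The main obstacle is the bookkeeping of the Galois orbit lengths — in particular verifying through the kernel argument and Remark \ref{rem:not} that the induced characters stay pairwise distinct under Galois conjugation — together with the triviality of the Schur indices; the realizability-over-the-character-field observation is what makes the latter clean.
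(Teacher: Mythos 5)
Your proposal is correct and follows essentially the same route as the paper's proof: Serre's Proposition 25 for the Frobenius group $\widetilde{G}=\widetilde{N}\rtimes\widetilde{P}$, the condition $(p,q-1)=1$ (Remark \ref{rem:not}) forcing all nontrivial orbits to have length $p$, the sum-of-squares count to confirm completeness, and Galois-orbit sums over ${\rm Gal}(\mathbb{Q}[w_q]/\mathbb{Q})$ combined with the field-of-definition observation to obtain the rational irreducibles. Your write-up is simply more explicit on two points the paper asserts without detail, namely that the conjugates $(\widetilde{V_j})^{\sigma}$ are pairwise distinct (your kernel argument) and that the Schur indices are trivial.
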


\begin{proof}
Given a	non-trivial complex irreducible representation $V$ of $\widetilde{N}$, each conjugate of $V$ by a power of $\widetilde{\Phi}$ induces the same complex irreducible representation $\widetilde{V}$ of $\widetilde{G}$. So there are $\dfrac{q^{(p-1)(r-2)}-1}{p}$ complex irreducible representations of the second kind, of degree $p$.

To see that these $\widetilde{V}$ together with those of the first kind are all the complex irreducible representations of $\widetilde{G}$, add the squares of their degrees: 
$$
\underbrace{(1+1+ \ldots + 1)}_{p} + p^2\displaystyle{\left( \dfrac{q^{(p-1)(r-2)}-1}{p}\right)} = p q^{(p-1)(r-2)} = |\widetilde{G}|.
$$

It is clear that $\chi_0$ and  $U = \chi_1 \oplus \ldots \oplus \chi_{p-1}$ are rational irreducible representations of $\widetilde{G}$.

Since the field of definition of a non-trivial complex irreducible  representation $V$ of $\widetilde{N}$ is $\mathbb{Q}[w_q]$, with $w_q$ a primitive $q$-th root of unity, the same holds for its induced representation  $\widetilde{V}$, and the rational representation $U$ of $\widetilde{G}$ Galois associated to a complex irreducible representation of the form $\widetilde{V}$ is obtained by adding the corresponding $(q-1)$ Galois conjugates of $\widetilde{V}$.
\end{proof}

\begin{cor}
Consider the action of conjugation by powers of $\widetilde{\Phi} $ on the set $\mathcal{L}$ of  maximal subgroups of $\widetilde{N}$. 
Then there is a natural bijective correspondence among the set ${\mathcal F}$ of orbits in $\mathcal{L}$ and the set of rational irreducible representations of $\widetilde{G}$ with kernel properly contained in $\widetilde{N}$, and both sets have cardinality $t=\dfrac{q^{(p-1)(r-2)}-1}{p(q-1)}$. 
\end{cor}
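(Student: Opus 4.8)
The plan is to obtain the correspondence by reading off the classification of rational irreducible representations of $\widetilde{G}$ established in the Proposition above, so that the only real work is to single out those with kernel properly contained in $\widetilde{N}$ and to verify that the resulting assignment is canonical and bijective.

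First I would compute the kernels of the three families of rational irreducibles. The trivial representation $\chi_0$ has kernel $\widetilde{G}$, and $U=\chi_1\oplus\cdots\oplus\chi_{p-1}$ has kernel $\bigcap_{i=1}^{p-1}\ker\chi_i=\widetilde{N}$, so neither is properly contained in $\widetilde{N}$. On the other hand, for each $U_j=\bigoplus_{\sigma}(\widetilde{V_j})^{\sigma}$ the Galois conjugates share the kernel of $\widetilde{V_j}$, so $\ker U_j=\ker\widetilde{V_j}=(\widetilde{L}_j)_{\widetilde{G}}$ by the general fact recalled before the Proposition; this core lies in $\widetilde{L}_j\subsetneq\widetilde{N}$ and is therefore a proper subgroup of $\widetilde{N}$. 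Hence the rational irreducibles of $\widetilde{G}$ with kernel properly contained in $\widetilde{N}$ are exactly $U_1,\ldots,U_t$, a set of cardinality $t$.

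Next I would set up the assignment $\widetilde{L}_j\mapsto U_j$ from $\mathcal{F}$ onto $\{U_1,\ldots,U_t\}$. Since both sets have $t$ elements and the assignment is surjective, it is automatically a bijection; the substantive point is that it is \emph{natural}, i.e.\ independent of the chosen orbit representatives $\widetilde{L}_j\in\mathcal{F}$ and of the chosen characters $V_j$. To see this, suppose $U_i=U_j$. By uniqueness of the decomposition of a rational irreducible as a single Galois orbit of complex irreducibles, $\widetilde{V_j}=(\widetilde{V_i})^{\sigma}$ for some $\sigma$. Because induction commutes with the Galois action, $(\widetilde{V_i})^{\sigma}=\widetilde{(V_i^{\sigma})}$, and because two characters of $\widetilde{N}$ induce the same representation of $\widetilde{G}$ precisely when they are $\widetilde{\Phi}$-conjugate (the bijection between $\widetilde{\Phi}$-orbits and degree-$p$ complex irreducibles underlying the Proposition), $V_j$ and $V_i^{\sigma}$ lie in one $\widetilde{\Phi}$-orbit. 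Taking kernels via $\ker(V_i^{\sigma})=\ker V_i=\widetilde{L}_i$ and $\ker(\mbox{}^{\widetilde{\Phi}^k}V)=\widetilde{\Phi}^{k}(\ker V)\widetilde{\Phi}^{-k}$ shows $\widetilde{L}_j$ is $\widetilde{\Phi}$-conjugate to $\widetilde{L}_i$, hence $i=j$. Reading this chain backwards shows that the orbit of any maximal $\widetilde{L}$ determines $U_j$ through its core $\widetilde{L}_{\widetilde{G}}$, with no reference to the auxiliary choices.

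Finally, for the cardinality I would use $2g=(p-1)(r-2)$ to write $m_{q,g}=(q^{(p-1)(r-2)}-1)/(q-1)$, and invoke Remark \ref{rem:not} together with $(p,q-1)=1$ to conclude that every orbit of $\widetilde{\Phi}$ on $\mathcal{L}$ has length $p$ (the length divides $p$, and length one would give a $\widetilde{\Phi}$-invariant maximal subgroup, which is excluded). Thus $|\mathcal{F}|=m_{q,g}/p=(q^{(p-1)(r-2)}-1)/(p(q-1))=t$, matching the count of the $U_j$. I expect the main obstacle to be purely organizational: keeping the three commuting operations — conjugation by $\widetilde{\Phi}$, the Galois action, and induction $\widetilde{N}\to\widetilde{G}$ — correctly aligned so that the correspondence is manifestly independent of the auxiliary choices; once the commutation relations recorded just before the Proposition are in hand, each step is immediate.
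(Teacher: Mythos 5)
Your proposal is correct and follows exactly the route the paper intends: the corollary is stated as an immediate consequence of the preceding Proposition, with the kernel computation ($\ker U=\widetilde{N}$, $\ker U_j=(\widetilde{L}_j)_{\widetilde{G}}\subsetneq\widetilde{N}$), the compatibility relations between $\widetilde{\Phi}$-conjugation, Galois action and induction recorded just before it, and the orbit count $|\mathcal{F}|=m_{q,g}/p=t$ supplying the bijection. You have merely written out in full the injectivity/naturality verification that the paper leaves implicit, using the same ingredients.
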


 The following can be seen as a consequence of Theorem \ref{producto1}.

\begin{prop}
The isotypical decomposition of $J\widetilde{{\mathcal X}}$ under the action of $\widetilde{G}$ is given by
$$
J\widetilde{{\mathcal X}} \cong_{\textup{isog}} J{\mathcal X} \times \prod_{j=1}^{t} (P({\mathcal Y}_{j}/{\mathcal X}))^p , \; ({\mathcal Y}_{j}=\widetilde{{\mathcal X}}/\widetilde{L}_{j}, \; \widetilde{L}_{j} \in {\mathcal F}),
$$
where $\widetilde{G}$ acts on $J{\mathcal X}$ via $U$ and on $(P({\mathcal Y}_{j}/{\mathcal X}))^p$ via $U_j$.
\end{prop}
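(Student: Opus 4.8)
The plan is to derive the statement from the $\widetilde{N}$-equivariant decomposition of Theorem \ref{producto1} by regrouping its factors according to the action of the complement $\widetilde{P}=\langle\widetilde{\Phi}\rangle$, i.e. by applying Clifford theory to the normal subgroup $\widetilde{N}\trianglelefteq\widetilde{G}$. Recall from the proof of Theorem \ref{producto1} that, for the $\widetilde{N}$-action,
\[
J\widetilde{{\mathcal X}} \cong_{\textup{isog}} \pi_{\widetilde{N}}^*(J{\mathcal X}) \times \prod_{\widetilde{L}_k \in {\mathcal L}} A_{W_k}, \qquad A_{W_k} \cong_{\textup{isog}} P({\mathcal Y}_k/{\mathcal X}),
\]
where $W_k$ is the rational irreducible representation of $\widetilde{N}$ with kernel $\widetilde{L}_k$. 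Since $\widetilde{N}$ is normal, conjugation by $\widetilde{\Phi}$ permutes the $\widetilde{N}$-isotypical subvarieties: $\widetilde{\Phi}$ carries $A_{W_k}$ isomorphically onto $A_{\,{}^{\widetilde{\Phi}}W_k}$, reproducing on the factors the conjugation action of $\widetilde{\Phi}$ on ${\mathcal L}$. First I would record that, because $(p,q-1)=1$, every such orbit has length $p$ (by Remark \ref{rem:not} together with Corollary \ref{lema3}), so that ${\mathcal L}$ splits into the $t$ orbits represented by ${\mathcal F}=\{\widetilde{L}_1,\ldots,\widetilde{L}_t\}$.

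Next I would identify the $\widetilde{G}$-isotypical components one by one. The trivial component is isogenous to $J(\widetilde{{\mathcal X}}/\widetilde{G})$, and since $\widetilde{{\mathcal X}}/\widetilde{G}={\mathcal X}/\langle\tau\rangle=\mathbb{P}^1$ has genus zero it vanishes. The subvariety $\pi_{\widetilde{N}}^*(J{\mathcal X})\cong J{\mathcal X}$ is precisely the $\widetilde{N}$-fixed part; it is $\widetilde{\Phi}$-invariant, and on it $\widetilde{P}$ acts with no nonzero invariants (again because ${\mathcal X}/\langle\tau\rangle$ has genus zero, so $J{\mathcal X}$ has no $\langle\tau\rangle$-invariant part). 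Hence it realizes the $U$-isotypical component, on which $\widetilde{G}$ acts via $U=\chi_1\oplus\cdots\oplus\chi_{p-1}$. Finally, for each representative $\widetilde{L}_j\in{\mathcal F}$, Clifford theory gives that the restriction of $U_j$ to $\widetilde{N}$ is the sum of the rational irreducibles $W_k$ with $\widetilde{L}_k$ running over the $\widetilde{\Phi}$-orbit of $\widetilde{L}_j$; this is exactly the content of $\widetilde{V_j}=\mathrm{Ind}_{\widetilde{N}}^{\widetilde{G}}V_j$ recorded in the previous Proposition. Consequently the $U_j$-isotypical $\widetilde{G}$-component is the sum of the $p$ factors $A_{W_k}$ in that orbit; since $\widetilde{\Phi}$ permutes them cyclically by isomorphisms, they are mutually isogenous to $P({\mathcal Y}_j/{\mathcal X})$, whence this component is $\cong_{\textup{isog}}(P({\mathcal Y}_j/{\mathcal X}))^p$ with $\widetilde{G}$ acting via $U_j$. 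Assembling the vanishing trivial part, the $U$-part, and the $U_j$-parts yields the claimed formula.

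The step I expect to be the main obstacle is making this regrouping rigorous, namely matching the $\widetilde{G}$-isotypical component attached to $U_j$ with the sum, over a single $\widetilde{\Phi}$-orbit, of the $\widetilde{N}$-isotypical components attached to the $W_k$. This is where one must combine the general isotypical-decomposition machinery of \cite{CR,CLR3} with Clifford theory for $\widetilde{N}\trianglelefteq\widetilde{G}$: one needs that the idempotent of $\widetilde{G}$ cutting out the $U_j$-component restricts to the sum of the idempotents of $\widetilde{N}$ cutting out the $A_{W_k}$ in the orbit, and that $\widetilde{\Phi}$ induces isomorphisms among these factors, so that the component is genuinely a $p$-th power rather than merely isogenous to a product of distinct (if isogenous) pieces. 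The bookkeeping of multiplicities—checking that each $W_k$ occurs exactly once in $U_j|_{\widetilde{N}}$, so that the exponent is precisely $p$—is the crux; everything else reduces to the genus-zero vanishing $J(\mathbb{P}^1)=0$ and to the orbit-length computation already at hand.
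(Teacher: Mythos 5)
Your proof is correct, but it follows a genuinely different route from the paper's. The paper does not regroup the decomposition of Theorem \ref{producto1}: its proof computes the two induced representations $\rho_{\widetilde{N}}=\mathrm{Ind}_{\widetilde{N}}^{\widetilde{G}}(1)=\chi_0\oplus U$ and $\rho_{\widetilde{L}_j}=\mathrm{Ind}_{\widetilde{L}_j}^{\widetilde{G}}(1)=\chi_0\oplus U\oplus U_j$ and then cites Corollary 5.4 of \cite{CR}, which converts exactly this multiplicity data into the claimed isogeny: the components of $J\widetilde{{\mathcal X}}$ occurring in $J(\widetilde{{\mathcal X}}/\widetilde{N})=J{\mathcal X}$ are those of $\chi_0$ and $U$, those occurring in $J(\widetilde{{\mathcal X}}/\widetilde{L}_j)=J{\mathcal Y}_j$ are those of $\chi_0$, $U$ and $U_j$, the $\chi_0$-part vanishes because $\widetilde{{\mathcal X}}/\widetilde{G}=\mathbb{P}^1$, and the exponent $p$ is the degree of $\widetilde{V_j}$. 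You instead make precise the paper's preceding remark that the Proposition ``can be seen as a consequence of Theorem \ref{producto1}'': you take the $\widetilde{N}$-isotypical decomposition proven there and fuse its factors along the $\widetilde{\Phi}$-conjugation orbits in ${\mathcal L}$, via Clifford theory for the normal subgroup $\widetilde{N}$ of prime index. The identity you flag as the crux --- that the rational central idempotent of $U_j$ in $\mathbb{Q}[\widetilde{G}]$ is the sum of the central idempotents of the rational irreducible representations of $\widetilde{N}$ whose kernels run over the orbit of $\widetilde{L}_j$ --- does hold: the character of $\widetilde{V_j}=\mathrm{Ind}_{\widetilde{N}}^{\widetilde{G}}V_j$ vanishes off $\widetilde{N}$, and since no maximal subgroup is $\widetilde{\Phi}$-invariant when $(p,q-1)=1$, the stabilizer of $V_j$ is $\widetilde{N}$ itself, so each conjugate occurs with multiplicity one and the orbit has length exactly $p$. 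As for what each approach buys: the paper's is shorter, because Corollary 5.4 of \cite{CR} packages all the bookkeeping you do by hand; yours is more transparent and closer to self-contained, exhibiting the exponent $p$ as an orbit length, the number $t$ of factors as the number of orbits, and inheriting the identification of each factor with $P({\mathcal Y}_j/{\mathcal X})$ directly from Theorem \ref{producto1} rather than re-deriving it from the multiplicity formula.
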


\begin{proof} If $ \; \widetilde{H} \; $ is any subgroup of $\widetilde{G}$ we will denote by $ \; \rho_{\widetilde{H}} \; $ the representation of $\widetilde{G}$ induced by the trivial representation of $ \; \widetilde{H}.$
It is easy to check that   $ \; \rho_{\widetilde{N}} = \chi_0 \oplus U \; $ and $ \; \rho_{\widetilde{L}_{j}} = \chi_0 \oplus U \oplus U_j, $ for all $ \; j \in \{1,\dots , t\}. \; $ Then the result follows applying  Corollary 5.4 of \cite{CR}.
\end{proof}

\section{On certain intermediate covers}
We now give a relation between certain subcovers of $\widetilde{{\mathcal X}}$ given as quotients by appropriate subgroups of $\widetilde{G}$.

First consider the regular covers $\pi_{\widetilde{P}} : \widetilde{{\mathcal X}} \to \widetilde{{\mathcal X}}/\widetilde{P}$ and $\pi_{\widetilde{N}} : \widetilde{{\mathcal X}} \to \widetilde{{\mathcal X}}/\widetilde{N} = {\mathcal X}$.  
According to Corollary \ref{lema3}, if $x \in {\rm Fix}(\tau) \subset {\mathcal X}$, then in the fiber $\pi_{\widetilde{N}}^{-1}(x)$ there is exactly one fixed point of $\widetilde{\Phi} $. Hence ${\mathcal T} := \widetilde{{\mathcal X}}/\widetilde{P}$  is a Riemann orbifold with exactly $r$ cone points (each one of order $p$) and, by the Riemann-Hurwitz formula, of genus
$$g_{{\mathcal T}}=\frac{((p-1)(r-2)-2)\left(q^{(p-1)(r-2)}-1\right)}{2p}.$$

We know that the set of  maximal  subgroups of $\widetilde{N}$ modulo the conjugation action by $\phi$ has cardinality $t = \dfrac{q^{(p-1)(r-2)}-1}{p(q-1)}$.
Choose a set of representatives  $\mathcal{F}:=\{ \widetilde{L}_{1},\ldots, \widetilde{L}_{t}\}$ of these $t$ conjugacy classes.  For each $\widetilde{L}_{j}$,  consider the closed Riemann surface ${\mathcal Y}_{j}=\widetilde{{\mathcal X}}/\widetilde{L}_{j}$. On ${\mathcal Y}_{j}$ we have the \'etale action of the induced group $L_{j} := \widetilde{N}/\widetilde{L}_{j} \cong {\mathbb Z}_{q}$ such that ${\mathcal X}={\mathcal Y}_{j}/L_{j}$. By the Riemann-Hurwitz formula, the genus of ${\mathcal Y}_{j}$ is
$$g_{{\mathcal Y}_{j}}=1+\frac{q((p-1)(r-2)-2)}{2}$$
and the Prym variety ${\rm P}({\mathcal Y}_{j}/{\mathcal X})$ associated the the covering ${\mathcal Y}_{j} \to {\mathcal X}$ has dimension
$${\rm dim}\; {\rm P}({\mathcal Y}_{j}/{\mathcal X})=(g-1)(q-1)=\frac{((p-1)(r-2)-2)(q-1)}{2}.$$

The next result follows immediately, (this generalizes  Corollary 2.9 of  \cite{CLR} for the case $q=2$).
\begin{cor}\label{coro1}
$\sum_{j=1}^{t} {\rm dim}\; {\rm P}({\mathcal Y}_{j}/{\mathcal X})={\rm dim} J{\mathcal T}.$
\end{cor}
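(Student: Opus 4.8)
The plan is to reduce the identity to a direct comparison of the explicit genus and Prym-dimension formulas already established in this section, since both summands and the target are constant across the index $j$. First I would observe that the formula ${\rm dim}\, {\rm P}({\mathcal Y}_j/{\mathcal X}) = (g-1)(q-1)$ computed above does not depend on $j$: each $\psi_j : {\mathcal Y}_j \to {\mathcal X}$ is an \'etale $q$-cyclic cover of the same surface ${\mathcal X}$ of genus $g$, so the Riemann-Hurwitz data are identical and every Prym in the sum has the common dimension $\frac{((p-1)(r-2)-2)(q-1)}{2}$. Hence the left-hand side collapses to $t \cdot (g-1)(q-1)$, and the statement becomes purely arithmetic.

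Next I would substitute the value $t = \frac{q^{(p-1)(r-2)}-1}{p(q-1)}$ and multiply:
\[
\sum_{j=1}^{t}{\rm dim}\, {\rm P}({\mathcal Y}_j/{\mathcal X}) = \frac{q^{(p-1)(r-2)}-1}{p(q-1)}\cdot\frac{((p-1)(r-2)-2)(q-1)}{2} = \frac{\left(q^{(p-1)(r-2)}-1\right)\big((p-1)(r-2)-2\big)}{2p}.
\]
The only manipulation is the cancellation of the factor $(q-1)$ between $t$ and the common Prym dimension; the denominator $2p$ is already shared by both factors.

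Finally I would compare this expression with ${\rm dim}\, J{\mathcal T} = g_{{\mathcal T}} = \frac{((p-1)(r-2)-2)\left(q^{(p-1)(r-2)}-1\right)}{2p}$, the genus obtained above by applying Riemann-Hurwitz to the orbifold quotient ${\mathcal T} = \widetilde{{\mathcal X}}/\widetilde{P}$, and note that the two agree term by term. Since the Jacobian of the Riemann orbifold ${\mathcal T}$ has dimension equal to the genus $g_{{\mathcal T}}$ of its underlying surface, this yields the claimed equality.

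There is no genuine obstacle here: the entire content sits in the two counts already carried out, namely the Riemann-Hurwitz computation of $g_{{\mathcal T}}$ and the Prym-dimension computation. The only point deserving a moment of care is verifying that the summands are identical, that is, that ${\rm dim}\, {\rm P}({\mathcal Y}_j/{\mathcal X})$ is genuinely independent of the chosen representative $\widetilde{L}_j \in {\mathcal F}$; this is immediate because the degree, the étale hypothesis, and the base genus $g$ are the same for every $j$. This is precisely why the result was announced as following immediately.
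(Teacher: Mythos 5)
Your proposal is correct and is exactly the argument the paper intends: the corollary is stated as following ``immediately'' from the formulas ${\rm dim}\,{\rm P}({\mathcal Y}_j/{\mathcal X})=(g-1)(q-1)=\frac{((p-1)(r-2)-2)(q-1)}{2}$, $t=\frac{q^{(p-1)(r-2)}-1}{p(q-1)}$, and $g_{\mathcal T}=\frac{((p-1)(r-2)-2)\left(q^{(p-1)(r-2)}-1\right)}{2p}$, and your computation carries out precisely that multiplication and cancellation. Your added remark that the Prym dimension is independent of the representative $\widetilde{L}_j\in{\mathcal F}$ is a correct and harmless elaboration of the same reasoning.
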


The next main result of this paper is the following, (see Theorem 3.1  of  \cite{CLR}).

\begin{thm}\label{teo1}
There is an isogeny 
$$\alpha:\prod_{j=1}^{t} {\rm P}({\mathcal Y}_{j}/{\mathcal X}) \to J{\mathcal T},$$
whose kernel is contained in the $q^{(p-1)(r-2)-1}$-torsion points.
\end{thm}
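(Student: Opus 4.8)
The plan is to construct the isogeny $\alpha$ directly from the pull-back maps and to control its kernel using the representation-theoretic decomposition of $J\widetilde{{\mathcal X}}$ already established. The key observation is that both sides embed (up to isogeny) into $J\widetilde{{\mathcal X}}$ as isotypical components, so $\alpha$ should be induced by a natural map between these components. Concretely, recall that ${\mathcal T} = \widetilde{{\mathcal X}}/\widetilde{P}$ and that $J{\mathcal T}$ is isogenous to the image of $\pi_{\widetilde{P}}^*$, i.e.\ to the $\widetilde{P}$-fixed part of $J\widetilde{{\mathcal X}}$. On the other hand, the product $\prod_{j=1}^t P({\mathcal Y}_j/{\mathcal X})$ collects exactly one representative from each $\widetilde{\Phi}$-orbit of the Prym factors appearing in Theorem \ref{producto1}. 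The first step is therefore to compute, via Frobenius reciprocity, the multiplicity with which each rational irreducible representation $U_j$ of $\widetilde{G}$ occurs in $J{\mathcal T}$ and in the product; I expect both to match, which forces the two abelian varieties to be isogenous and identifies $J{\mathcal T}$ as the sum over orbit representatives of a single copy of each $P({\mathcal Y}_j/{\mathcal X})$.

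First I would make the dimension count from Corollary \ref{coro1} into a genuine isogeny. Define $\alpha$ on the factor $P({\mathcal Y}_j/{\mathcal X})$ by the composition
\begin{equation*}
P({\mathcal Y}_j/{\mathcal X}) \xrightarrow{\ \pi_{\widetilde{L}_j}^*\ } J\widetilde{{\mathcal X}} \xrightarrow{\ \mathrm{Nm}_{\widetilde{P}}\ } J{\mathcal T},
\end{equation*}
where $\pi_{\widetilde{L}_j}^*$ is the pull-back to the $q$-homology cover and $\mathrm{Nm}_{\widetilde{P}}$ is the norm map (equivalently $(\pi_{\widetilde{P}})_*$) to ${\mathcal T}=\widetilde{{\mathcal X}}/\widetilde{P}$. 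Summing these over $j=1,\ldots,t$ produces a homomorphism $\alpha$ between abelian varieties of equal dimension by Corollary \ref{coro1}. The next step is to check surjectivity, which follows because the images $\mathrm{Nm}_{\widetilde{P}}(\pi_{\widetilde{L}_j}^*(\cdots))$ generate the isotypical components of $J{\mathcal T}$ corresponding to all the $U_j$, and these exhaust the non-trivial part of $J{\mathcal T}$ (the trivial part, coming from $\chi_0$ and $U$, corresponds to $J{\mathcal X}$, which has genus zero contribution relative to the orbifold ${\mathcal T}$ after accounting for the fixed part). Equidimensionality plus surjectivity gives that $\alpha$ is an isogeny.

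The main obstacle, and the heart of the proof, is the bound on $\ker\alpha$: showing it lies in the $q^{(p-1)(r-2)-1}$-torsion. Here I would argue that the kernel of each constituent map is controlled by the exponent of the deck group involved. The pull-back $\pi_{\widetilde{L}_j}^*$ has kernel contained in $J{\mathcal Y}_j[\,|\widetilde{L}_j|\,]$, and since $|\widetilde{L}_j|$ divides $|\widetilde{N}| = q^{(p-1)(r-2)}$, all torsion appearing is $q$-power torsion of exponent at most $q^{(p-1)(r-2)}$. To sharpen this to exponent $q^{(p-1)(r-2)-1}$ one exploits that $\widetilde{L}_j$ is a \emph{maximal} (hence index-$q$) subgroup of $\widetilde{N}$, so the relevant étale covers have degree $q$ and the composite $\mathrm{Nm}_{\widetilde{P}}\circ\pi_{\widetilde{L}_j}^*$ followed by the dual map multiplies by a factor whose $q$-adic valuation is one less than the full $(p-1)(r-2)$. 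The cleanest way to package this is to exhibit a complementary isogeny $\beta:J{\mathcal T}\to\prod_j P({\mathcal Y}_j/{\mathcal X})$ with $\beta\circ\alpha$ and $\alpha\circ\beta$ equal to multiplication by an explicit integer dividing $q^{(p-1)(r-2)-1}$, which immediately yields the torsion bound; verifying that the composite is indeed this scalar is the routine-but-delicate calculation I would defer to the degree bookkeeping in the diagram of Theorem \ref{galoiscover}.
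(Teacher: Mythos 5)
Your construction of $\alpha$ is exactly the paper's: each factor $P({\mathcal Y}_j/{\mathcal X})$ is mapped by ${\rm Nm}\,\mu \circ \nu_j^*$ (your ${\rm Nm}_{\widetilde{P}}\circ \pi_{\widetilde{L}_j}^*$), the dimension count comes from Corollary \ref{coro1}, and your final packaging idea---exhibit a map $\beta: J{\mathcal T} \to \prod_j P({\mathcal Y}_j/{\mathcal X})$ so that the composite with $\alpha$ is multiplication by an explicit $q$-power---is also the paper's strategy. But the step you defer as ``routine-but-delicate degree bookkeeping'' is precisely the mathematical heart of the theorem, and it is not bookkeeping in the diagram of Theorem \ref{galoiscover}; it is Proposition \ref{prop3.3}. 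That proposition requires three inputs your proposal never supplies: (i) the explicit description (Proposition \ref{p2.3}, quoted from Recillas--Rodr\'{\i}guez) of $A_j=\nu_j^*(P({\mathcal Y}_j/{\mathcal X}))$ inside $J\widetilde{{\mathcal X}}$ as the connected component of the $\widetilde{L}_j$-fixed points killed by the norm $\sum_{k=0}^{q-1} n_j^k$; (ii) Lemma \ref{lemm:lemaL}, that coset representatives of $\widetilde{N}/\widetilde{L}_j$ also represent $\widetilde{L}'/(\widetilde{L}_j\cap \widetilde{L}')$ for any other maximal subgroup $\widetilde{L}'$; and (iii) the fact that $(p,q-1)=1$ forces $\widetilde{L}_j \neq \widetilde{\Phi}^{-k}\widetilde{L}_j\widetilde{\Phi}^{k}$ for $1\le k\le p-1$ (Remark \ref{rem:not}). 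With these, one computes that on $A_j$ the operator $\sum_{h\in\widetilde{L}_j} h\circ\sum_{k=0}^{p-1}\widetilde{\Phi}^k$ equals $|\widetilde{L}_j|=q^{(p-1)(r-2)-1}$ times the identity: the $k=0$ term contributes $|\widetilde{L}_j|\,z$, and every cross term $\sum_{h\in\widetilde{L}_j}h\,\widetilde{\Phi}^k(z)$ vanishes because it factors through the norm condition of a \emph{different} maximal subgroup. Without this cancellation argument you have neither finiteness of $\ker\alpha$ (your surjectivity sketch via isotypical multiplicities is itself unproved) nor the torsion bound.

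Moreover, your heuristic for where the exponent $(p-1)(r-2)-1$ comes from is wrong in substance: it is not that ``the pull-back kernel is controlled by $|\widetilde{L}_j|$'' (the kernel of $\nu_j^*$ is dual to the deck group, hence killed by $q$, a much smaller bound), nor a $q$-adic valuation dropped by one via a dual map. The number $q^{(p-1)(r-2)-1}$ enters for one reason only: it is the order of the maximal subgroup $\widetilde{L}_j$, appearing because $\sum_{h\in\widetilde{L}_j}h$ acts as multiplication by $|\widetilde{L}_j|$ on the $\widetilde{L}_j$-invariant subvariety $A_j$. So the proposal has the right skeleton but a genuine gap where the proof actually lives; as written, it would not yield either conclusion of the theorem.
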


\subsection{Example}
Let us consider $r=3$, $p \geq 5$ and $q \geq 2$ prime integers so that $p \neq q$ and $(p,q-1)=1$. In this case, ${\mathcal X}$ is a Belyi curve of genus $g=(p-1)/2$, defined by an algebraic curve of the form $y^{p}=x(x-1)^{\alpha}$, some $\alpha \in \{1,\ldots,p-2\}$, and $\tau(x,y)=(x,\omega_{p}y)$, where $\omega_{p}=e^{2 \pi i/p}$. The $q$-homology cover $\widetilde{{\mathcal X}}$ has genus $\widetilde{g}=1+q^{p-1}(p-3)/2$ and the $q$-homology group is $\widetilde{N} \cong {\mathbb Z}_{q}^{p-1}$. The number of $\widetilde{\Phi}$-conjugacy classes of  subgroups $L_{j} \cong {\mathbb Z}_{q}^{p-2}$ of $\widetilde{N}$ is $t=(q^{p-1}-1)/p(q-1)$. Let $\widetilde{L}_{1},\ldots,\widetilde{L}_{t}$ be representatives of these conjugacy classes, and set ${\mathcal Y}_{j}=\widetilde{{\mathcal X}}/\widetilde{L}_{j}$. 

Then ${\mathcal Y}_{j}$ has genus $g_{{\mathcal Y}_{j}}=1+q(p-3)/2$ and the dimension of the Prym variety $P({\mathcal Y}_{j}/{\mathcal X})$ is $(p-3)(q-1)/2$. The Riemann orbifold ${\mathcal T}=\widetilde{{\mathcal X}}/\widetilde{P}$ has genus $g_{{\mathcal T}}=(p-3)(q^{p-1}-1)/2p$. 
Theorem \ref{teo1} asserts that we have an isogeny
$$J{\mathcal T} \cong_{\textup{isog}} \prod_{j=1}^{t} P({\mathcal Y}_{j}/{\mathcal X}).$$

If we consider the lowest value of $p=5$, then $g=2$, $\widetilde{g}=1+q^4$, $g_{{\mathcal T}}=(q^{4}-1)/5$, $t=(q^{4}-1)/5(q-1)$, $g_{{\mathcal Y}_{j}}=1+q$, 
${\rm dim}\; P({\mathcal Y}_{j}/{\mathcal X})=q-1$.
If moreover, we consider $q=2$, then $t=3$, each $P({\mathcal Y}_{j}/{\mathcal X})$ is of genus one and ${\mathcal T}$ of genus $3$. So, the Jacobian variety $J{\mathcal T}$, of dimension $3$, is isogenous to the product of three elliptic curves.

\section{Proof of Theorem \ref{teo1}}
\subsection{The isogeny $\alpha$}
For  each $j \in\{1, \dots, t\}$ we set
$$
\nu_j: \widetilde{{\mathcal X}} \rightarrow {\mathcal Y}_j \quad \mbox{and} \quad \mu: \widetilde{{\mathcal X}} \rightarrow {\mathcal T},
$$
the covering maps with ${\rm deck}(\nu_{j})=\widetilde{L}_{j}$ and ${\rm deck}(\mu)= \widetilde{P}$.
Then we have the corresponding induced homomorphisms
$$\nu_i^*:J{\mathcal Y}_i \rightarrow J\widetilde{{\mathcal X}} \quad \mbox{and} \quad {\rm Nm} \mu: J\widetilde{{\mathcal X}} \rightarrow J{\mathcal T}$$ 
between the corresponding Jacobians.

Now the addition map gives a homomorphism
\begin{equation} \label{e2.1}
\alpha:= \sum_{i=1}^{t} {\rm Nm} \mu \circ \nu_i^*{_{\vert P({\mathcal Y}_i/{\mathcal X})}} : \; \prod_{i=1}^t  P({\mathcal Y}_i/{\mathcal X}) \rightarrow J{\mathcal T}.
\end{equation}

According to Corollary \ref{coro1}, $\displaystyle\prod_{i=1}^t P({\mathcal Y}_i/{\mathcal X})$ and $J{\mathcal T}$ are of the same dimension.
In order to prove that $\alpha : \displaystyle\prod_{i=1}^t  P({\mathcal Y}_i/{\mathcal X}) \rightarrow J{\mathcal T}$ is an isogeny, with kernel contained in the $q^{(p-1)(r-2)-1}$-torsion points, we use the following result (for the proof see \cite[Corollary 2.7]{RR}).

\begin{prop} \label{p2.3}
Let $f: {\mathcal Z} \rightarrow {\mathcal X} := {\mathcal Z}/N$ be a Galois cover of smooth projective curves with Galois group $N$  and $H \subset N$ be a subgroup. 
Denote by $\nu: {\mathcal Z} \rightarrow {\mathcal Y} := {\mathcal Z}/H$  the corresponding cover. 
If $\{g_1, \dots, g_r\}$ is a complete set of representatives of $N/H$, then 
$$\nu^*(P({\mathcal Y}/{\mathcal X})) = \{ z \in J{\mathcal Z}^H \;|\; \sum_{i=1}^r g_i(z) = 0 \}^0.$$
\end{prop}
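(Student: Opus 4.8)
The plan is to work with the standard description of the Prym variety as the connected component of the kernel of a norm map. Write $\pi : {\mathcal Y} \to {\mathcal X}$ for the induced cover, so that $f = \pi \circ \nu$, and ${\rm Nm}_{\pi} : J{\mathcal Y} \to J{\mathcal X}$ for its norm map; then $P({\mathcal Y}/{\mathcal X}) = (\ker {\rm Nm}_{\pi})^{0}$. The whole proof rests on a single compatibility identity relating the ``downstairs'' norm to the ``upstairs'' sum over coset representatives, namely
\begin{equation} \label{plan:key}
f^{*} \circ {\rm Nm}_{\pi} \; = \; \sum_{i=1}^{r} g_{i} \circ \nu^{*} \qquad \text{as homomorphisms } J{\mathcal Y} \to J{\mathcal Z}.
\end{equation}
Once \eqref{plan:key} is established, the rest is formal bookkeeping with connected components and a dimension count.

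I would prove \eqref{plan:key} by evaluating both sides on the class of a single point, since both maps are homomorphisms and differences of point classes generate $J{\mathcal Y}$. Fix $Q \in {\mathcal Y}$ and a preimage $P_{0} \in \nu^{-1}(Q)$. As $\nu$ is the Galois cover with group $H$, pullback of divisors gives $\nu^{*}([Q]) = \sum_{h \in H} h([P_{0}])$, where $h$ denotes the induced automorphism of $J{\mathcal Z}$ (the usual count absorbs ramification indices into the sum over the full group). Since ${\rm Nm}_{\pi}([Q]) = [\pi(Q)]$ and $f$ is Galois with group $N$, choosing the same $P_{0}$ (which lies over $\pi(Q)$) yields $f^{*}([\pi(Q)]) = \sum_{g \in N} g([P_{0}])$. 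On the other hand $\sum_{i} g_{i}(\nu^{*}([Q])) = \sum_{i}\sum_{h \in H} g_{i}h([P_{0}])$, and since $\{ g_{i} h : 1 \le i \le r,\ h \in H\}$ runs exactly once over $N$, this equals $\sum_{g \in N} g([P_{0}])$. Both sides agree, proving \eqref{plan:key}. The same observation, that $g_{i}h(z) = g_{i}(z)$ for $z \in J{\mathcal Z}^{H}$, shows the operator $\sum_{i} g_{i}$ is independent of the chosen representatives, so the right-hand set in the statement is well defined.

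With \eqref{plan:key} in hand I would record the standard facts that $f^{*}$ has finite kernel and that $\nu^{*}$ induces an isogeny $J{\mathcal Y} \to (J{\mathcal Z}^{H})^{0}$ (with ${\rm Nm}_{\nu} \circ \nu^{*} = |H|\,{\rm id}$ and image landing in $J{\mathcal Z}^{H}$). For $w \in J{\mathcal Y}$, \eqref{plan:key} reads $\sum_{i} g_{i}(\nu^{*}(w)) = f^{*}({\rm Nm}_{\pi}(w))$; as $f^{*}$ has finite kernel, the locus $\{ w : \sum_{i} g_{i}\nu^{*}(w) = 0\}$ equals ${\rm Nm}_{\pi}^{-1}(\ker f^{*})$, which differs from $\ker {\rm Nm}_{\pi}$ only in finitely many components and therefore shares the identity component $P({\mathcal Y}/{\mathcal X})$. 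In particular, for $w \in P({\mathcal Y}/{\mathcal X})$ one has ${\rm Nm}_{\pi}(w) = 0$, hence $\nu^{*}(w) \in J{\mathcal Z}^{H}$ with $\sum_{i} g_{i}(\nu^{*}(w)) = 0$; since $P({\mathcal Y}/{\mathcal X})$ is connected and contains $0$, this gives the inclusion $\nu^{*}(P({\mathcal Y}/{\mathcal X})) \subseteq \{ z \in J{\mathcal Z}^{H} : \sum_{i} g_{i}(z) = 0\}^{0}$.

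For the reverse inclusion I would compare dimensions. Setting $S := \{ z \in J{\mathcal Z}^{H} : \sum_{i} g_{i}(z) = 0\}$, the preimage $(\nu^{*})^{-1}(S)$ is precisely $\{ w : \sum_{i} g_{i}\nu^{*}(w) = 0\}$, whose identity component is $P({\mathcal Y}/{\mathcal X})$; since $\nu^{*}$ has finite kernel and surjects onto $(J{\mathcal Z}^{H})^{0} \supseteq S^{0}$, this yields $\dim S^{0} = \dim P({\mathcal Y}/{\mathcal X}) = \dim \nu^{*}(P({\mathcal Y}/{\mathcal X}))$. Thus $\nu^{*}(P({\mathcal Y}/{\mathcal X}))$ is a connected abelian subvariety of $S^{0}$ of the same dimension, so the two coincide, completing the proof. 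The one genuinely delicate point is the identity \eqref{plan:key}; everything after it is formal, and I expect the careful handling of ramification indices and the choice of compatible base points $P_{0}$ in the divisor-class computation to be where the argument must be written most attentively.
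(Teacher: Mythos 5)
Your proof is correct. A point of comparison worth knowing: the paper itself does not prove Proposition~\ref{p2.3} at all --- it imports the statement from \cite[Corollary 2.7]{RR} --- so the only comparison available is with that source, and your argument is essentially the standard one behind it. The entire content is the intertwining identity $f^{*}\circ {\rm Nm}_{\pi}=\sum_{i=1}^{r} g_{i}\circ \nu^{*}$, which in the group-algebra language of \cite{RR} is the factorization $\sum_{g\in N} g=\bigl(\sum_{i=1}^{r} g_{i}\bigr)\circ\bigl(\sum_{h\in H} h\bigr)$ on $J{\mathcal Z}$ combined with $\nu^{*}\circ{\rm Nm}_{\nu}=\sum_{h\in H}h$ and $f^{*}\circ {\rm Nm}_{f}=\sum_{g\in N}g$; after that, everything is bookkeeping with identity components, exactly as you carry it out. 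Two small points deserve more careful wording. First, a point class $[Q]$ has degree one and is not an element of $J{\mathcal Y}$; the clean formulation is that all maps involved are induced by homomorphisms on divisor groups compatible with linear equivalence, so verifying the identity on divisors of points (with your stabilizer-multiplicity count absorbing ramification) implies it on ${\rm Pic}^{0}$. Second, your well-definedness remark tacitly pairs left cosets $g_{i}H$ with the pushforward action of $N$ on $J{\mathcal Z}$; with the pullback convention one would need right cosets --- your write-up is internally consistent, but the convention should be stated. Finally, your concluding dimension count can be compressed: since $\nu^{*}$ is an isogeny onto $(J{\mathcal Z}^{H})^{0}$, the identity component of $(\nu^{*})^{-1}(S)$, where $S:=\{z\in J{\mathcal Z}^{H} : \sum_{i}g_{i}(z)=0\}$, maps onto $S^{0}$, and you have already identified that identity component with $P({\mathcal Y}/{\mathcal X})$ via ${\rm Nm}_{\pi}^{-1}(\ker f^{*})$; this yields $S^{0}=\nu^{*}(P({\mathcal Y}/{\mathcal X}))$ directly, with no comparison of dimensions needed.
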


Now, for each $j \in \{1, \dots, t\}$ let us set 
$$
A_j := \nu_j^*(P({\mathcal Y}_j/{\mathcal X})), \;
A := \sum_{i=1}^t A_i \quad \mbox{and} \quad B:= \mu^*(J{\mathcal T}).
$$

Recall that $\widetilde{G} = \widetilde{N} \rtimes \widetilde{P}$, with
$$
\widetilde{N} = \left\{ \prod_{i=1}^{(p-1)(r-2)} s_i^{n_i} \;|\; 0 \leq n_i \leq q-1, i =1, \dots , (p-1)(r-2) \right\}
\; \mbox{and} \; \widetilde{P} = \langle \phi \rangle,
$$
where, by Lemma \ref{lema1} (Gilman's adapted basis), there is 
a disjoint decomposition into $(r-2)$ subcollections 
$\{s_{j_{1}},\ldots, s_{j_{p-1}}\}$, $j=1,\ldots, r-2$, such that, for each $j$, it holds that, if we set $s_{j_{p}}=(s_{j_{1}}s_{j_{2}}\cdots s_{j_{p-1}})^{-1}$, then the  conjugation action $\sigma^{*}$ induced by $\sigma$ on $N$ is given by 
\begin{equation}\label{accion}
\sigma_{*}(s_{j_{1}})=s_{j_{2}}, \sigma_{*}(s_{j_{2}})=s_{j_{3}}, \ldots, \sigma_{*}(s_{j_{p-2}})=s_{j_{p-1}}, \sigma_{*}(s_{j_{p-1}})=s_{j_{p}}, \sigma_{*}(s_{j_{p}})=s_{j_{1}}.
\end{equation}

The following result is clear.

\begin{lem} \label{lemm:lemaL}
Let $\widetilde{L}_1$ and $\widetilde{L}_2$ be two different  maximal  subgroups of $\widetilde{N}$. Then $[\widetilde{L}_1 : \widetilde{L}_1 \cap \widetilde{L}_2] = q$, and if $\{ n_1=1, n_2, n_2^2 , \ldots , n_2^{q-1}\}$ is a complete set of representatives of $\widetilde{N}/\widetilde{L}_1$, then the same set is a complete set of representatives of $\widetilde{L}_2/\widetilde{L}_1 \cap \widetilde{L}_2$.  
\end{lem}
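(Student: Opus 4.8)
The plan is to reduce everything to elementary linear algebra over the field $\mathbb{F}_q = \mathbb{Z}_q$, using the fact that $\widetilde{N} \cong \mathbb{Z}_q^{2g}$ is an $\mathbb{F}_q$-vector space and that its maximal subgroups are precisely its hyperplanes, i.e. subspaces of codimension one. Under this dictionary a maximal subgroup $\widetilde{L}$ is the kernel of a nonzero linear functional $\widetilde{N} \to \mathbb{F}_q$, well-defined up to a nonzero scalar. First I would record that $[\widetilde{L}_1 : \widetilde{L}_1 \cap \widetilde{L}_2] = q$: since $\widetilde{L}_1 \ne \widetilde{L}_2$ are distinct hyperplanes, neither contains the other, so $\widetilde{L}_1 \cap \widetilde{L}_2$ is a proper subspace of $\widetilde{L}_1$; being the intersection of two distinct hyperplanes it has codimension two in $\widetilde{N}$, hence codimension one in $\widetilde{L}_1$, giving index exactly $q$.

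For the second assertion I would argue as follows. Let $\{1, n_2, n_2^2, \ldots, n_2^{q-1}\}$ be a complete set of representatives of $\widetilde{N}/\widetilde{L}_1$; in additive language this says the coset of $n_2$ generates the quotient line $\widetilde{N}/\widetilde{L}_1 \cong \mathbb{F}_q$, equivalently $n_2 \notin \widetilde{L}_1$. The key point is to show that one may take $n_2 \in \widetilde{L}_2$. Since $\widetilde{L}_2 \not\subset \widetilde{L}_1$ (the subgroups are distinct hyperplanes), there exists an element of $\widetilde{L}_2$ lying outside $\widetilde{L}_1$; choosing such an element as $n_2$ gives a representative system for $\widetilde{N}/\widetilde{L}_1$ that also lies in $\widetilde{L}_2$. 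Then the powers $n_2^i$ for $0 \le i \le q-1$ all lie in $\widetilde{L}_2$, and because $[\widetilde{L}_2 : \widetilde{L}_1 \cap \widetilde{L}_2] = q$ (by symmetry with the first part) and the $n_2^i$ remain pairwise incongruent modulo $\widetilde{L}_1 \supset \widetilde{L}_1 \cap \widetilde{L}_2$, they form a complete set of representatives of $\widetilde{L}_2/(\widetilde{L}_1 \cap \widetilde{L}_2)$ as claimed.

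The only subtlety — and the step I would flag as requiring the most care — is that the statement fixes an \emph{arbitrary} representative system of the form $\{1, n_2, \ldots, n_2^{q-1}\}$ and asserts the same set works for $\widetilde{L}_2$, rather than merely asserting the existence of \emph{some} good system. To handle this honestly I would note that any generator $n_2$ of $\widetilde{N}/\widetilde{L}_1$ may be replaced by $n_2 \cdot \ell$ for a suitable $\ell \in \widetilde{L}_1$ without changing its coset modulo $\widetilde{L}_1$, and that the freedom to choose $\ell$ lets one land inside $\widetilde{L}_2$ precisely because $\widetilde{L}_1 + \widetilde{L}_2 = \widetilde{N}$ (two distinct hyperplanes span the whole space). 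Thus writing $n_2 = n_2' + \ell$ with $n_2' \in \widetilde{L}_2$ and $\ell \in \widetilde{L}_1$, the given system $\{n_2^i\}$ and the adjusted system $\{(n_2')^i\}$ represent the same cosets of $\widetilde{L}_1$, and replacing $\widetilde{L}_1$ by $\widetilde{L}_1 \cap \widetilde{L}_2$ throughout then yields the conclusion, completing the proof.
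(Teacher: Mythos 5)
The paper itself offers no argument for this lemma---it is introduced with ``The following result is clear''---so the only comparison available is with the intended one-line justification, and the correct core of your proposal supplies exactly that. Your first claim is right (distinct maximal subgroups of $\widetilde{N}\cong\mathbb{Z}_q^{2g}$ are hyperplanes, so $\widetilde{L}_1\widetilde{L}_2=\widetilde{N}$ and the second isomorphism theorem, or your codimension count, gives $\widetilde{L}_1/(\widetilde{L}_1\cap\widetilde{L}_2)\cong\widetilde{N}/\widetilde{L}_2\cong\mathbb{Z}_q$), and so is the first half of your second argument: for any $n_2\in\widetilde{L}_2\setminus\widetilde{L}_1$, the powers $n_2^i$ form a transversal simultaneously of $\widetilde{N}/\widetilde{L}_1$ and of $\widetilde{L}_2/(\widetilde{L}_1\cap\widetilde{L}_2)$.

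The genuine gap is in your final patch, and you were right to flag that step as the delicate one---but the fix you propose does not work. Writing $n_2=n_2'\ell$ with $n_2'\in\widetilde{L}_2$ and $\ell\in\widetilde{L}_1$, one has $n_2^i=(n_2')^i\ell^i$, and $\ell^i$ lies in $\widetilde{L}_1$ but in general \emph{not} in $\widetilde{L}_2$, hence not in $\widetilde{L}_1\cap\widetilde{L}_2$; so $\{n_2^i\}$ and $\{(n_2')^i\}$ represent the same cosets of $\widetilde{L}_1$ but \emph{different} cosets of $\widetilde{L}_1\cap\widetilde{L}_2$, and ``replacing $\widetilde{L}_1$ by $\widetilde{L}_1\cap\widetilde{L}_2$ throughout'' is not a valid move. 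In fact no adjustment can rescue the literal statement: choosing $n_2\notin\widetilde{L}_1\cup\widetilde{L}_2$ (possible, since no group is the union of two proper subgroups), the set $\{1,n_2,\ldots,n_2^{q-1}\}$ is a transversal of $\widetilde{N}/\widetilde{L}_1$ yet none of $n_2,\ldots,n_2^{q-1}$ even lies in $\widetilde{L}_2$, so the set cannot represent the cosets of $\widetilde{L}_1\cap\widetilde{L}_2$ in $\widetilde{L}_2$. The honest resolution is to read (or restate) the lemma as asserting that the inclusion $\widetilde{L}_2\hookrightarrow\widetilde{N}$ induces an isomorphism $\widetilde{L}_2/(\widetilde{L}_1\cap\widetilde{L}_2)\to\widetilde{N}/\widetilde{L}_1$, i.e.\ each coset $n_2^i\widetilde{L}_1$ meets $\widetilde{L}_2$ in exactly one coset of $\widetilde{L}_1\cap\widetilde{L}_2$---equivalently, your corrected transversal with $n_2\in\widetilde{L}_2$. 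This weaker form is all that the paper actually uses: in the proof of Proposition \ref{prop3.3} the relevant sums are evaluated at points $z$ fixed by every element of $\widetilde{L}_i$, so the discrepancy $\ell^i\in\widetilde{L}_1$ between $n_2^i$ and a true representative acts trivially there and is invisible.
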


Recall furthermore that $\widetilde{L}_j$ is the subgroup of $\widetilde{N}$ giving the cover $\widetilde{{\mathcal X}}  \rightarrow {\mathcal Y}_{j}$, and thus determines the covering ${\mathcal Y}_{j} \rightarrow {\mathcal X}$. Then it is easy to see that we have the following commutative diagram
{\small
\begin{equation} \label{d2.5}
\xymatrix{ 
&A \ar[rr]^{\displaystyle \sum_{i=0}^{p-1}  \widetilde{\Phi}^i}  \ar[dr]_{{\rm Nm} \mu} && B \ar[rr]^{\displaystyle \sum_{i=1}^t \sum_{h \in \widetilde{L}_i} h} \ar[dr]_{\beta} && A\\
\displaystyle\prod_{i=1}^t P({\mathcal Y}_i/{\mathcal X})   \ar[rr]_{\alpha} \ar[ur]^{ \sum_{i=1}^t \nu_i^*} && J{\mathcal T} \ar[rr]_{\beta \circ \mu^*} \ar[ur]_{\mu^*} && \displaystyle \prod_{i=1}^t P({\mathcal Y}_i/{\mathcal X}) \ar[ur]_{{\sum_{i=1}^t \nu_i^*}}
}
\end{equation}
}
with $\beta = ({\rm Nm} \nu_1, {\rm Nm} \nu_2, \dots, {\rm Nm} \nu_t)$.

For $i = 1, \dots,t$, we consider the following sub-diagram
\begin{equation} \label{d2.4}
\xymatrix{
&A_i \ar[rr]^{\sum_{i=0}^{p-1}   \widetilde{\Phi}^i}  \ar[dr]_{{\rm Nm} \mu} && B_i \ar[rr]^{\sum_{h \in \widetilde{L}_i} h} \ar[dr]_{{\rm Nm} \nu_i} && A_i\\
P({\mathcal Y}_i/{\mathcal X})   \ar[rr]_{\alpha_i} \ar[ur]^{ \nu_i^*} && C_i \ar[rr]_{{\rm Nm} \nu_i \circ \mu^*} \ar[ur]_{\mu^*} && P({\mathcal Y}_i/{\mathcal X}) \ar[ur]_{\nu_i^*}
}
\end{equation}
with $\alpha_i:= {\rm Nm} \mu \circ \nu_i^*, \; C_i := {\rm Nm} \mu(A_i)$ and $B_i := \mu^*(C_i)$.

\begin{prop}  \label{prop3.3}
For $i = 1, \dots, t$ the map ${\rm Nm} \nu_i \circ \mu^* \circ \alpha_i : P({\mathcal Y}_i/{\mathcal X}) \rightarrow P({\mathcal Y}_i/{\mathcal X})$ is multiplication by $q^{(p-1)(r-2)-1}$.
\end{prop}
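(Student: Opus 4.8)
The plan is to reduce the composite $\Theta:={\rm Nm}\,\nu_i\circ\mu^*\circ\alpha_i={\rm Nm}\,\nu_i\circ\mu^*\circ{\rm Nm}\,\mu\circ\nu_i^*$ to the two standard identities for a finite cover $f:\mathcal{Z}\to\mathcal{W}$, namely ${\rm Nm}\,f\circ f^*=[\deg f]$ and, when $f$ is Galois with group $H$ (acting on $J\mathcal{Z}$ through deck transformations), $f^*\circ{\rm Nm}\,f=\sum_{h\in H}h$. Applying the second identity to $\mu$, which is Galois with group $\widetilde{P}=\langle\widetilde{\Phi}\rangle$, I would rewrite $\mu^*\circ{\rm Nm}\,\mu=\sum_{k=0}^{p-1}\widetilde{\Phi}^{k}$ on $J\widetilde{{\mathcal X}}$, so that $\Theta=\sum_{k=0}^{p-1}{\rm Nm}\,\nu_i\circ\widetilde{\Phi}^{k}\circ\nu_i^*$. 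The summand $k=0$ is ${\rm Nm}\,\nu_i\circ\nu_i^*=[\deg\nu_i]=[q^{(p-1)(r-2)-1}]$ by the first identity, since $\widetilde{L}_i$ is maximal and hence $\deg\nu_i=|\widetilde{L}_i|=q^{(p-1)(r-2)-1}$. Everything therefore hinges on showing that each summand with $1\le k\le p-1$ vanishes on $P({\mathcal Y}_i/{\mathcal X})$.

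To treat a fixed $k\neq0$ I would compose on the left with $\nu_i^*$ and use the Galois identity $\nu_i^*\circ{\rm Nm}\,\nu_i=\sum_{h\in\widetilde{L}_i}h$. Because $\widetilde{N}$ is abelian and $\widetilde{\Phi}$ normalizes it, the coset $\widetilde{L}_i\widetilde{\Phi}^{k}$ equals $\widetilde{\Phi}^{k}\widetilde{L}_{i,k}$ for the conjugate $\widetilde{L}_{i,k}:=\widetilde{\Phi}^{-k}\widetilde{L}_i\widetilde{\Phi}^{k}$, and a direct reindexing gives
$$
\nu_i^*\circ{\rm Nm}\,\nu_i\circ\widetilde{\Phi}^{k}=\Big(\sum_{h\in\widetilde{L}_i}h\Big)\circ\widetilde{\Phi}^{k}=\widetilde{\Phi}^{k}\circ\sum_{h'\in\widetilde{L}_{i,k}}h'.
$$
Since $(p,q-1)=1$ forces the $\widetilde{\Phi}$-orbit of $\widetilde{L}_i$ in $\mathcal{L}$ to have length $p$ (Lemma \ref{lema5} and Remark \ref{rem:not}), the subgroup $\widetilde{L}_{i,k}$ is a \emph{maximal subgroup distinct from} $\widetilde{L}_i$. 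Thus the problem reduces to proving that $\sum_{h'\in\widetilde{L}_{i,k}}h'$ annihilates $\nu_i^*(P({\mathcal Y}_i/{\mathcal X}))$, and this is the main obstacle.

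This vanishing is where Lemma \ref{lemm:lemaL} and Proposition \ref{p2.3} enter. Writing $w:=\nu_i^*(z)$ for $z\in P({\mathcal Y}_i/{\mathcal X})$, Proposition \ref{p2.3} (with $\mathcal{Z}=\widetilde{{\mathcal X}}$, $N=\widetilde{N}$, $H=\widetilde{L}_i$) tells me that $w$ is $\widetilde{L}_i$-invariant and satisfies $\sum_{g\in\widetilde{N}/\widetilde{L}_i}g(w)=0$. Applying Lemma \ref{lemm:lemaL} to the two distinct maximal subgroups $\widetilde{L}_i$ and $\widetilde{L}_{i,k}$, I may choose $n\in\widetilde{L}_{i,k}$ whose powers $\{1,n,\dots,n^{q-1}\}$ simultaneously represent $\widetilde{N}/\widetilde{L}_i$ and $\widetilde{L}_{i,k}/(\widetilde{L}_i\cap\widetilde{L}_{i,k})$. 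Factoring $\widetilde{L}_{i,k}=\bigsqcup_{j=0}^{q-1}(\widetilde{L}_i\cap\widetilde{L}_{i,k})\,n^{j}$ and using that $w$ is fixed by $\widetilde{L}_i\cap\widetilde{L}_{i,k}\subseteq\widetilde{L}_i$ then yields
$$
\sum_{h'\in\widetilde{L}_{i,k}}h'(w)=|\widetilde{L}_i\cap\widetilde{L}_{i,k}|\sum_{j=0}^{q-1}n^{j}(w)=|\widetilde{L}_i\cap\widetilde{L}_{i,k}|\sum_{g\in\widetilde{N}/\widetilde{L}_i}g(w)=0 .
$$
Hence every $k\neq0$ summand dies after applying $\nu_i^*$, and I obtain $\nu_i^*\circ\Theta=[q^{(p-1)(r-2)-1}]\circ\nu_i^*$ on $P({\mathcal Y}_i/{\mathcal X})$.

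Finally, to upgrade this to the asserted exact equality $\Theta=[q^{(p-1)(r-2)-1}]$, I would observe that $\Theta-[q^{(p-1)(r-2)-1}]$ is a homomorphism $P({\mathcal Y}_i/{\mathcal X})\to P({\mathcal Y}_i/{\mathcal X})$ whose image lies in $\ker\nu_i^*$; as $\nu_i$ is \'etale with abelian group $\widetilde{L}_i$, this kernel is finite, and a homomorphism of abelian varieties with finite (hence trivial connected) image must be $0$. The one point to verify beforehand is that $\Theta$ does land in $P({\mathcal Y}_i/{\mathcal X})$, which follows from the $\widetilde{G}$-equivariance of all the maps involved, i.e. from the fact that they respect the isotypical decomposition underlying diagram \eqref{d2.4}. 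I expect the genuine work to be concentrated entirely in the third paragraph: once the representatives of $\widetilde{N}/\widetilde{L}_i$ and of $\widetilde{L}_{i,k}/(\widetilde{L}_i\cap\widetilde{L}_{i,k})$ are aligned via Lemma \ref{lemm:lemaL}, the inner sum collapses to the Prym norm-condition of Proposition \ref{p2.3}, and the remainder is bookkeeping.
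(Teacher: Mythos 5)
Your proposal is correct and follows essentially the same route as the paper's proof: both transfer the problem along the isogeny $\nu_i^*$ to $A_i$, expand $\mu^*\circ{\rm Nm}\,\mu=\sum_{k=0}^{p-1}\widetilde{\Phi}^k$, identify the $k=0$ term as multiplication by $|\widetilde{L}_i|=q^{(p-1)(r-2)-1}$, and kill each $k\neq 0$ term by rewriting $\bigl(\sum_{h\in\widetilde{L}_i}h\bigr)\circ\widetilde{\Phi}^k$ as a sum over the conjugate subgroup $\widetilde{\Phi}^{-k}\widetilde{L}_i\widetilde{\Phi}^k$ and collapsing it, via Lemma \ref{lemm:lemaL}, onto the norm-zero condition of Proposition \ref{p2.3}. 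The only differences are presentational: you make explicit the identities ${\rm Nm}\,f\circ f^*=[\deg f]$ and the final finite-kernel descent that the paper leaves implicit in the phrase ``since $\nu_i^*$ is an isogeny, it suffices to show\dots''.
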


\begin{proof}
Since $\nu_i^*: P({\mathcal Y}_i/{\mathcal X}) \rightarrow A_i$ is an isogeny, it suffices to show that the composition
$$\Phi_i: = \sum_{h \in \widetilde{L}_i} h \circ \sum_{i = 0} ^{p-1}  \widetilde{\Phi}^i: A_i \rightarrow A_i$$
is multiplication by $q^{(p-1)(r-2)-1}$. Now from Proposition \ref{p2.3} we deduce that
\begin{equation} \label{e2.7}
A_i = \{ z \in J\widetilde{{\mathcal X}} \;|\; hz = z \; \mbox{for all} \; h \in \widetilde{L}_i \; \mbox{and} \; 0 = \sum_{j=0}^{q-1} n_i^j \, z \; \}^0,
\end{equation}
where $\{ 1, n_i, n_i^2 , \ldots , n_i^{q-1}\}$  is a complete set of representatives of  $\widetilde{N}/\widetilde{L}_i$.

Now for any $z \in A_i$,
$$\Phi_i(z) = \sum_{h \in \widetilde{L}_i} h(z) + \sum_{h \in \widetilde{L}_i} h \sum_{k=1}^{p-1}  \widetilde{\Phi}^k(z).$$
By \eqref{e2.7}  we have
$$\sum_{h \in \widetilde{L}_i} h (z) = |\widetilde{L}_i| z = q^{(p-1)(r-2)-1}z,$$
and, for $k = 1,\dots,p-1$,
$$\sum_{h \in \widetilde{L}_i} h  \widetilde{\Phi}^k(z) =  \widetilde{\Phi}^k \sum_{h \in  \widetilde{\Phi}^{-k} \widetilde{L}_i  \widetilde{\Phi}^k} h(z) = 0,$$
since $\widetilde{L}_i \neq  \widetilde{\Phi}^{-k} \widetilde{L}_i  \widetilde{\Phi}^k$ and it follows from Lemma \ref{lemm:lemaL} that
$$\sum_{h \in  \widetilde{\Phi}^{-k} \widetilde{L}_i  \widetilde{\Phi}^k} h(z) = \sum_{j=0}^{q-1} n_i^j \left( \sum_{f \in \widetilde{L}_i \cap  \widetilde{\Phi}^{-k} \widetilde{L}_i  \widetilde{\Phi}^k} f(z)\right) = |\widetilde{L}_i \cap  \widetilde{\Phi}^{-k} \widetilde{L}_i  \widetilde{\Phi}^k| \sum_{j=0}^{q-1} n_i^j z =0.$$
	
Together this completes the proof of the proposition.
\end{proof}

Now, we proceed to finish the proof of the theorem.
Since
$$
\beta \circ \mu^* \circ \alpha = \prod_{i=1}^t ({\rm Nm} {\nu_i} \circ \mu^* \circ \alpha_i),
$$
Proposition \ref{p2.3} implies that $\beta \circ \mu^* \circ \alpha$ is multiplication by $q^{(p-1)(r-2)}$.
In particular $\alpha$ has finite kernel. But according to Corollary \ref{coro1},
$\displaystyle\prod_{i=1}^t P({\mathcal Y}_i/{\mathcal X})$ and $J{\mathcal T}$ have the same dimension. So $\alpha$ is an isogeny.



\begin{thebibliography}{99}


\bibitem{Allcock;Hall;2010}
D. Allcock and Ch. Hall. 
\emph{Monodromy groups of {H}urwitz-type problems}, 
Adv. Math. \textbf{225} (2010), no.~1, 69--80.

\bibitem{Arenas;Rojas;2010}
L. Arenas-Carmona and A.M. Rojas. 
\emph{Unramified prime covers of hyperelliptic curves and pairs of {$p$}-gonal curves}, 
In the tradition of {A}hlfors-{B}ers. {V}, Contemp. Math., vol. 510, Amer. Math. Soc., Providence, RI, 2010, pp.~35--47.

\bibitem{Artebani;Pirola;2005}
M. Artebani and G. Pirola. 
\emph{Algebraic functions with even monodromy}, 
Proc. Amer. Math. Soc. \textbf{133} (2005), no.~2, 331--341 (electronic).

\bibitem{Biggers;Fried;1986}
R. Biggers and M. Fried. 
\emph{Irreducibility of moduli spaces of cyclic unramified covers of genus {$g$} curves}, 
Trans. Amer. Math. Soc. \textbf{295} (1986), no.~1, 59--70.

	
\bibitem{bl}
Ch. Birkenhake and H. Lange.
{\emph Complex Abelian Varieties}, 2nd edition, Grundl. Math. Wiss., vol. {\bf 302}, Springer (2004).
	
\bibitem{Carocca;Romero} 
A. Carocca and M. Romero-Rojas. 
{\emph On Galois group of factorized covers of curves}, 
Rev. Mat. Iberoam. \textbf{34} (2018), no. 4, 1853--1866.
	
\bibitem{CLR}
A. Carocca, H. Lange and R. E. Rodr\'{\i}guez.
\emph{\'Etale double covers of cyclic $p$-gonal covers}, 
J. Algebra {\textbf 538} (2019), 110 --126.
	
	

	
\bibitem{CLR3}
A. Carocca, H. Lange and R. E. Rodr\'{\i}guez.
{\emph Abelian varieties with finite abelian group action},
Arch. Math. {\bf 112}  (2019), 615–-622.
	
	
\bibitem{CR}
A. Carocca, R. E. Rodr\'{\i}guez. 
{\emph Jacobians with group actions and rational idempotents}, 
J. Algebra  {\bf 306} (2006), 322--343.

\bibitem{Diaz;Donagi;1989}
S. Diaz, R. Donagi, and D. Harbater. 
\emph{Every curve is a {H}urwitz space}, 
Duke Math. J. \textbf{59} (1989), no.~3, 737--746.

\bibitem{GAP}
The GAP~Group. 
\emph{GAP -- Groups, Algorithms, and Programming, Version 4.10.2}; 2019, \verb+(https://www.gap-system.org)+.	
	
\bibitem{Gilman}
J. Gilman.
{\emph A matrix representation for automorphisms of compact Riemann surfaces},
Linear Algebra and Appl. {\bf 17} No. 2 (1977), 139--147.


\bibitem{Guralnick;1995}
R. M. Guralnick and M.  Neubauer. 
\emph{Monodromy groups of branched coverings: the generic case}, 
Recent developments in the inverse {G}alois problem ({S}eattle, {WA}, 1993), 
Contemp. Math., vol. 186, Amer. Math. Soc., Providence, RI, 1995, pp.~325--352.

\bibitem{Guralnick;Shareshian;2007}
R. M. Guralnick and J. Shareshian. 
\emph{Symmetric and alternating groups as monodromy groups of {R}iemann surfaces. {I}. {G}eneric covers and covers with many branch points}, 
Mem. Amer. Math. Soc. \textbf{189} (2007), no.~886, vi+128, With an appendix by Guralnick and R. Stafford.



\bibitem{Hidalgo:homology}
R. A. Hidalgo.
\emph{ $(g,k)$-Fermat curves}, 
https://arxiv.org/abs/1902.03286
	
\bibitem{Kanev;2006}
V. Kanev. 
\emph{Hurwitz spaces of {G}alois coverings of {${\Bbb P}^1$}, whose {G}alois groups are {W}eyl groups}, 
J. Algebra \textbf{305} (2006), no.~1, 442--456.


\bibitem{Magaard;Volklein;2004}
K.  Magaard and H. V{\"o}lklein. 
\emph{The monodromy group of a function on a general curve}, 
Israel J. Math. \textbf{141} (2004), 355--368.

\bibitem{Maskit}
B, Maskit. \emph{The homology covering of a Riemann surface}, 
Tohoku Math. J. (2) \textbf{38} (1986), no. ~4, 561–562.

	
\bibitem{m}
D. Mumford.
\emph{Prym varieties I.}
In: Contr. to Analysis, Academic Press, 325--350 (1974).
	
	
\bibitem{Recillas;1994}
S. Recillas. 
\emph{The {J}acobian of the {G}alois extension of a trigonal curve}, 
X{XVI}th {N}ational {C}ongress of the {M}exican {M}athematical {S}ociety ({S}panish) ({M}orelia, 1993), 
Aportaciones Mat. Comun., vol.~14, Soc. Mat. Mexicana, M\'exico, 1994, pp.~159--167.
	
\bibitem{RR}
S. Recillas  and R. E. Rodr\'{\i}guez.
\emph{ Prym varieties and fourfold covers II}, 
Contemp. Math. {\bf 397} (2006), 177--191.
		
\bibitem{s}
J.-P. Serre.
\emph{Repr\'esentations lin\'eaires des groups finis}, 
Hermann, Paris (1967).
	


\bibitem{Vetro;2007}
F.  Vetro. 
\emph{Irreducibility of {H}urwitz spaces of coverings with monodromy groups {W}eyl groups of type {$W(B_d)$}}.
Boll. Unione Mat. Ital. Sez. B Artic. Ric. Mat. (8) \textbf{10} (2007), no.~2, 405--431.

\bibitem{Zariski;1978}
O.  Zariski.
\emph{Collected papers. {V}ol. {III}}, The MIT Press, Cambridge, Mass.-London, 1978, Topology of curves and surfaces, and special topics in the theory of algebraic varieties, Edited and with an introduction by M. Artin and B. Mazur, Mathematicians of Our Time.

\end{thebibliography}
\end{document}